\documentclass[11pt]{amsart}
\usepackage{amsmath, amsfonts, amsbsy, amssymb, upref, setspace}

\begin{document}

\newtheorem{thm}{Theorem}[section]
\newtheorem{lmm}[thm]{Lemma}
\newtheorem{cor}[thm]{Corollary}
\newtheorem{prop}[thm]{Proposition}
\newtheorem{defn}[thm]{Definition}
\newcommand{\argmax}{\operatorname{argmax}}
\newcommand{\argmin}{\operatorname{argmin}}
\newcommand{\avg}[1]{\bigl\langle #1 \bigr\rangle}
\newcommand{\bbb}{\mathbf{B}}
\newcommand{\bbg}{\mathbf{g}}
\newcommand{\bbr}{\mathbf{R}}
\newcommand{\bbw}{\mathbf{W}}
\newcommand{\bbx}{\mathbf{X}}
\newcommand{\bbxa}{\mathbf{X}^*}
\newcommand{\bbxb}{\mathbf{X}^{**}}
\newcommand{\bbxp}{\mathbf{X}^\prime}
\newcommand{\bbxpp}{\mathbf{X}^{\prime\prime}}
\newcommand{\bbxt}{\tilde{\mathbf{X}}}
\newcommand{\bby}{\mathbf{Y}}
\newcommand{\bbz}{\mathbf{Z}}
\newcommand{\bbzt}{\tilde{\mathbf{Z}}}
\newcommand{\bigavg}[1]{\biggl\langle #1 \biggr\rangle}
\newcommand{\bp}{b^\prime}
\newcommand{\bx}{\mathbf{x}}
\newcommand{\by}{\mathbf{y}}
\newcommand{\cc}{\mathbb{C}}
\newcommand{\cov}{\mathrm{Cov}}
\newcommand{\dd}{\mathcal{D}}
\newcommand{\ee}{\mathbb{E}}
\newcommand{\fp}{f^\prime}
\newcommand{\fpp}{f^{\prime\prime}}
\newcommand{\fppp}{f^{\prime\prime\prime}}
\newcommand{\ii}{\mathbb{I}}
\newcommand{\gp}{g^\prime}
\newcommand{\gpp}{g^{\prime\prime}}
\newcommand{\gppp}{g^{\prime\prime\prime}}
\newcommand{\hess}{\operatorname{Hess}}
\newcommand{\ma}{\mathcal{A}}
\newcommand{\mf}{\mathcal{F}}
\newcommand{\mi}{\mathcal{I}}
\newcommand{\ml}{\mathcal{L}}
\newcommand{\cp}{\mathcal{P}}
\newcommand{\mx}{\mathcal{X}}
\newcommand{\mxp}{\mathcal{X}^\prime}
\newcommand{\mxpp}{\mathcal{X}^{\prime\prime}}
\newcommand{\my}{\mathcal{Y}}
\newcommand{\myp}{\mathcal{Y}^\prime}
\newcommand{\mypp}{\mathcal{Y}^{\prime\prime}}
\newcommand{\pp}{\mathbb{P}}
\newcommand{\ppr}{p^\prime}
\newcommand{\pppr}{p^{\prime\prime}}
\newcommand{\ra}{\rightarrow}
\newcommand{\rr}{\mathbb{R}}
\newcommand{\smallavg}[1]{\langle #1 \rangle}
\newcommand{\sss}{\sigma^\prime}
\newcommand{\st}{\sqrt{t}}
\newcommand{\sst}{\sqrt{1-t}}
\newcommand{\tr}{\operatorname{Tr}}
\newcommand{\uu}{\mathcal{U}}
\newcommand{\var}{\mathrm{Var}}
\newcommand{\ve}{\varepsilon}
\newcommand{\vp}{\varphi^\prime}
\newcommand{\vpp}{\varphi^{\prime\prime}}
\newcommand{\ww}{W^\prime}
\newcommand{\xp}{X^\prime}
\newcommand{\xpp}{X^{\prime\prime}}
\newcommand{\xt}{\tilde{X}}
\newcommand{\xx}{\mathcal{X}}
\newcommand{\yp}{Y^\prime}
\newcommand{\ypp}{Y^{\prime\prime}}
\newcommand{\zt}{\tilde{Z}}
\newcommand{\zz}{\mathbb{Z}}
 
\newcommand{\fpar}[2]{\frac{\partial #1}{\partial #2}}
\newcommand{\spar}[2]{\frac{\partial^2 #1}{\partial #2^2}}
\newcommand{\mpar}[3]{\frac{\partial^2 #1}{\partial #2 \partial #3}}
\newcommand{\tpar}[2]{\frac{\partial^3 #1}{\partial #2^3}}

\newcommand{\ts}{\widetilde{S}}
\newcommand{\bw}{\mathbf{w}}
\newcommand{\bbs}{\mathbf{S}}
\newcommand{\bbu}{\mathbf{U}}
\newcommand{\bs}{\mathbf{s}}
\newcommand{\bz}{\mathbf{z}}
\newcommand{\tw}{\widetilde{W}}

\title[A new approach to strong embeddings]{A new approach to strong embeddings}
\author{Sourav Chatterjee}
\address{Courant Institute of Mathematical Sciences, New York University, 251 Mercer Street, New York, NY 10012. {\it E-mail: \tt sourav@cims.nyu.edu}\newline 
}
\subjclass[2000]{60F17, 60F99, 60G50}
\keywords{Strong embedding, KMT embedding, Stein's method}
\thanks{The author's research was partially supported by NSF grant DMS-0707054 and a Sloan Research Fellowship.}

\begin{abstract}
We revisit strong approximation theory from a new perspective, culminating in a proof of the Koml\'os-Major-Tusn\'ady embedding theorem for the simple random walk. The proof is almost entirely based on a series of soft arguments and easy inequalities. 
The new technique, inspired by Stein's method of normal approximation, is applicable to any setting where Stein's method works. In particular, one can hope to take it beyond sums of independent random variables. \end{abstract}
\maketitle

\section{Introduction}\label{intro}
Let $\ve_1,\ve_2,\ldots$ be i.i.d.\ random variables with $\ee (\ve_1) = 0$ and $\ee(\ve_1^2) = 1$. For each $k$, let
\[
S_k = \sum_{i=1}^k \ve_i.
\]
Suppose we want to construct a standard Brownian motion  $(B_t)_{t\ge 0}$ on the same probability space so as to minimize the growth rate of
\begin{equation}\label{max}
\max_{1\le k\le n} |S_k - B_k|.
\end{equation}
Since $S_n$ and $B_n$ both grow like $\sqrt{n}$, one would typically like to have the above quantity growing like $o(\sqrt{n})$, and preferably, as slowly as possible. This is the classical problem of coupling a random walk with a Brownian motion, usually called an `embedding problem' because the most common approach is to start with a Brownian motion and somehow extract the random walk as a process embedded in the Brownian motion.

The study of such embeddings began with the works of Skorohod \cite{skorohod61, skorohod65} and Strassen \cite{strassen67}, who showed that under the condition $\ee(\ve_1^4)< \infty$, it is possible to make \eqref{max} grow like $n^{1/4}(\log n)^{1/2}(\log \log n)^{1/4}$. In fact,  this was shown to be the best possible rate under the finite fourth moment assumption by Kiefer \cite{kiefer69}.  

For a long time, this remained the best available result in spite of numerous efforts by a formidable list of authors to improve on Skorohod's idea.  For a detailed account of these activities, see the comprehensive recent survey of Ob\l\'oj~\cite{obloj04} and the bibliography of the monograph by Cs\"org\H{o} and R\'ev\'esz \cite{csorgorevesz81}. Therefore it came as a great  surprise when Koml\'os, Major, and Tusn\'ady~\cite{kmt75}, almost fifteen years after Skorohod's original work, proved by a completely different argument that one can actually have
\[
\max_{k\le n} |S_k - B_k| = O(\log n)
\]
when $\ve_1$ has a finite moment generating function in a neighborhood of zero.  
Moreover, they showed that this is the best possible result that one can hope for in this situation.
\begin{thm}[Koml\'os-Major-Tusn\'ady \cite{kmt75}]\label{mainthm}
Let  $\ve_1,\ve_2,\ldots$ be i.i.d.\ random variables with $\ee(\ve_1) = 0$, $\ee(\ve_1^2) = 1$, and $\ee \exp\theta|\ve_1| < \infty$ for some $\theta > 0$. For each $k$, let $S_k := \sum_{i=1}^k \ve_i$. Then for any $n$, it is possible to construct a version of $(S_k)_{0\le k\le n}$ and a standard Brownian motion $(B_t)_{0\le t\le n}$  on the same probability space such that for all $x\ge 0$,
\[
\pp\bigl(\max_{k\le n} |S_k - B_k| \ge C\log n + x\bigr) \le K e^{-\lambda x},
\] 
where $C$, $K$, and $\lambda$ do not depend on $n$.
\end{thm}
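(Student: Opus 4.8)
The plan is to build the coupling by the dyadic refinement of the classical Hungarian construction, but to replace the delicate Edgeworth/characteristic-function estimate underlying the usual coupling lemma with a soft argument in the spirit of Stein's method. First, reduce to $n = 2^m$: for general $n$, embed into the next power of two, losing only constants; and since the statement concerns only $\max_{k \le n}|S_k - B_k|$ at integer times, it suffices to couple $(S_k)_{0 \le k \le n}$ with the Gaussian walk $(B_k)_{0 \le k \le n}$ and to fill in the rest of the Brownian motion by independent bridges afterwards. I would then construct the coupling top-down on the dyadic tree: first generate $S_n$ and couple it with $B_n \sim N(0,n)$; then, given a dyadic block $[a,b]$ on which $S_a, S_b, B_a, B_b$ have already been produced, generate $S_{(a+b)/2}$ from its conditional law given $(S_a, S_b)$ and $B_{(a+b)/2}$ from its conditional law given $(B_a, B_b)$, the latter being $N(\tfrac{B_a + B_b}{2}, \tfrac{b-a}{4})$, the two being coupled via the lemma below. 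Writing $d_x := S_x - B_x$, and using that the recentered half-sum $S_{(a+b)/2} - \tfrac{S_a + S_b}{2}$ is, by exchangeability of the two halves, \emph{symmetric} about $0$ with variance close to $\tfrac{b-a}{4}$, a one-line computation gives the recursion $d_{(a+b)/2} = \tfrac12(d_a + d_b) + \eta_{[a,b]}$, where $\eta_{[a,b]}$ is the coupling error committed at that node. Thus the deviation process $(d_k)$ is a tree-indexed, branching-random-walk-type superposition of the node errors $\eta$.

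The heart of the argument is the coupling lemma: for a block of length $\ell$, one can couple the recentered conditional half-sum $W$ with a centered Gaussian $V$ of matching variance so that $|W - V| \le C(1 + V^2/\ell)$ outside an event of conditional probability at most $e^{-c\ell}$, the bound being uniform in the conditioning data. The point is to prove this without Edgeworth expansions: the symmetry of $W$ already annihilates the leading skewness correction, and the closeness of the law of $W$ to that of $V$ that one needs, pointwise in the bulk and to order $1/\ell$, should come out of a Stein-type identity obtained by the leave-one-out / exchangeable-pair device, which gives an explicit small bound on $\ee[W f(W)] - \ee[f'(W)]$ with no analytic machinery. One then upgrades this distributional estimate to a coupling, either by the quantile map, translating the bulk density/distribution-function bound into the pointwise estimate $|W - V| \le C(1 + V^2/\ell)$, or by running a coupled pair of Stein dynamics (the one-coordinate resampling chain of $W$ against a discretized Ornstein--Uhlenbeck process for $V$) and controlling their discrepancy via the contraction of the coupling. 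Either way, since $V^2/\ell$ is of $\chi^2_1$ type, each $\eta_{[a,b]}$ has a genuinely exponential conditional tail at an $O(1)$ scale, hence a bounded conditional moment generating function.

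To finish, assemble the pieces. By the recursion, $d_k$ is, for each $k$, a weighted sum with $O(m)$ terms and weights bounded by $1$ of the node errors $\eta$ along the route of the dyadic tree that reaches $k$; so $\max_{k \le n}|d_k|$ is a maximum over the $2^m = n$ leaves of sums of that kind, of branching-random-walk type, with steps that (after suitable conditioning) are independent, of $O(1)$ scale, and exponentially tailed, over depth $m$. Bounding the conditional moment generating function of each step and iterating the conditioning shows that any fixed route sum has moment generating function at most $e^{\kappa m}$ for some constant $\kappa$; a Chernoff bound together with a union bound over the $n = 2^m$ routes then gives $\pp(\max_{k \le n}|S_k - B_k| \ge C\log n + x) \le K e^{-\lambda x}$, once $C$ is taken large enough to absorb the entropy term $m\log 2$. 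The bottom $O(1)$ levels, blocks of bounded length, where the strong coupling lemma degrades because its exceptional event is no longer negligible, are handled directly by a quantile coupling of $O(1)$-length increments against Gaussians: each such error has an exponential tail by the finite-exponential-moment hypothesis, and as there are at most $n$ of them, a further union bound costs only $O(\log n)$ plus an exponential tail.

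I expect the main obstacle to be the coupling lemma, and precisely the quantitative strength one must extract from a soft argument: one needs from a Stein identity not a mere Kolmogorov-distance bound but a \emph{pointwise}, bulk-to-tail control of the coupling discrepancy of the exact order $1 + V^2/\ell$, with a true exponential rather than stretched-exponential tail, and one must obtain it for the half-sum \emph{conditioned} on its total, where exact independence is lost. Preventing the soft estimates from leaking a spurious logarithm at this step, and dealing cleanly with the conditioning, including atypical values of the conditioning data, is where the real work lies; the dyadic assembly and the branching-random-walk maximal bound are, by comparison, routine once the lemma is in hand.
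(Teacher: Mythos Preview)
First, a clarification: the paper does not give its own proof of Theorem~\ref{mainthm} in the stated generality. It is quoted as the classical result of \cite{kmt75}; the paper's new proof covers only the simple random walk (Theorem~\ref{ourthm}), and the author explicitly says he does \emph{not} know how to push the method to general increments because the smoothing trick of Example~3 has no obvious analogue. So your proposal, which aims at the full theorem, is already more ambitious than what the paper delivers.

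Restricting to the SRW case, your overall architecture is close in spirit but differs in execution. You keep the explicit dyadic tree and the branching-random-walk union bound of the original Hungarian proof; the paper replaces this by an \emph{induction} on $n$ (Theorem~\ref{maincoupling}) in which the hypothesis is a moment-generating-function bound of the form $\ee\exp(\lambda\max_k|W_k-Z_k|)\le \exp(C\log n + K\lambda^2 S_n^2/n)$ for the \emph{pinned} walk. The induction absorbs the tree bookkeeping: one splits at $k\approx n/2$, couples the midpoint via a conditional Tusn\'ady lemma (Theorem~\ref{coupling2}), and applies the hypothesis on the two halves, with Lemma~\ref{moment2} supplying the crucial contraction (the constant $3/4<1$) that makes the recursion close. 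This is cleaner than the explicit tree and avoids the separate union bound.

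The genuine gap in your proposal is the coupling lemma itself. You correctly isolate it as the crux, but neither of your two suggested routes actually works as stated. A Stein identity of the form $\ee[Wf(W)]\approx\ee[f'(W)]$ gives \emph{integrated} closeness (Wasserstein, Kolmogorov); it does not by itself yield the pointwise bulk density/distribution control needed to make the quantile map produce $|W-V|\le C(1+V^2/\ell)$ --- that is precisely the Edgeworth-type input you are trying to avoid. Your second route, ``coupled Stein dynamics,'' is a plausible heuristic but not a proof. The paper's actual mechanism is different from both: Theorem~\ref{coupling} constructs the coupling as (a marginal of) an \emph{invariant measure} for a degenerate two-dimensional diffusion generator, obtained via the Schauder--Tychonoff fixed-point theorem, and then reads off the exponential tail of $W-Z$ directly from the generator identity by testing against $f(x,y)=(x-y)^{2k}$. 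The key input is not a bound on $\ee[Wf(W)]-\ee[f'(W)]$ but an exact \emph{Stein coefficient} $T$ with $\ee[W\varphi(W)]=\ee[T\varphi'(W)]$; the quality of the coupling is governed by $(T-\sigma^2)^2/\sigma^2$. For the conditional half-sum in the SRW this $T$ is built by the smoothing-by-uniform trick (Lemma~\ref{intparts}), and it is exactly this construction that the paper cannot extend beyond $\pm1$ increments. So your plan is missing the one genuinely new idea the paper supplies, and without it the ``soft'' coupling lemma remains an aspiration rather than a lemma.
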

The paper \cite{kmt75} also contains another very important result, a similar embedding theorem for uniform empirical processes. However, this will not be discussed in this article. See the recent articles by Mason \cite{mason07} and Cs\"org\H{o}~\cite{csorgo07} as well as the book \cite{csorgohorvath93} for more on  the KMT embedding theorem for empirical procceses.

One problem with the proof of Theorem \ref{mainthm}, besides being technically difficult, is that it is very  hard to generalize. Indeed, even the most basic extension to the case of non-identically distributed summands by Sakhanenko \cite{sakhanenko84} is so complex that some researchers are hesitant to use it (see also Shao \cite{shao95}). A nearly optimal multivariate version of the KMT theorem was proved by Einmahl \cite{einmahl89}; the optimal result was obtained by Zaitsev~\cite{zaitsev98} at the end of an extraordinary amount of hard work. More recently, Zaitsev has established multivariate versions of Sakhanenko's theorem \cite{zaitsev00, zaitsev01a, zaitsev01b}. 
For further details and references,  let us refer to the survey article by Zaitsev~\cite{zaitsev02} in the Proceedings of the ICM 2002.

The investigation in this paper is targeted towards a more conceptual understanding of the problem that may allow one to go beyond sums of independent random variables. It begins with the following abstract method of coupling an arbitrary random variable $W$ with a Gaussian random variable $Z$ so that $W-Z$ has exponentially decaying tails at the appropriate scale. (Such a coupling will henceforth be called a {\it strong coupling}, to distinguish it from the `weak' couplings  given by bounds on total variation or Wasserstein metrics.) 
\begin{thm}
\label{coupling}
Suppose $W$ is a random variable with $\ee(W)= 0$ and finite second moment. Let $T$ be another random variable, defined on the same probability space as $W$, such that whenever $\varphi$ is a Lipschitz function and $\vp$ is a derivative of $\varphi$ a.e., we have 
\begin{equation}\label{coeffdef}
\ee(W \varphi(W)) = \ee(\vp(W)T).
\end{equation}
Suppose $|T|$ is almost surely bounded by a constant. Then, given any $\sigma^2 >0$, we can construct $Z \sim N(0,\sigma^2)$ on the same probability space such that 
for any $\theta \in \rr$,
\[
\ee\exp(\theta|W-Z|) \le 2\;\ee\exp\biggl(\frac{2\theta^2(T -\sigma^2)^2}{\sigma^2}\biggr).
\]
\end{thm}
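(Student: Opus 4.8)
The plan is to realise the coupling through a pair of \emph{stationary} diffusions run on the common time axis $(-\infty,0]$ and driven by a single two-sided Brownian motion: the first is the Stein diffusion attached to $W$, the second an Ornstein--Uhlenbeck process with the prescribed Gaussian stationary law. The point is that the confining drift $-x\,dt$ present in both diffusions produces an exponential weight which is exactly what makes a later Jensen step close. First I would reduce to the case $T=\tau(W)$: replace $T$ by $\widetilde T:=\ee(T\mid W)$; since $\vp(W)$ is $\sigma(W)$-measurable, \eqref{coeffdef} still holds with $\widetilde T$, one has $|\widetilde T|\le\|T\|_\infty$, and since $t\mapsto\exp\!\bigl(2\theta^2(t-\sigma^2)^2/\sigma^2\bigr)$ is convex, conditional Jensen gives $\ee\exp\!\bigl(2\theta^2(\widetilde T-\sigma^2)^2/\sigma^2\bigr)\le\ee\exp\!\bigl(2\theta^2(T-\sigma^2)^2/\sigma^2\bigr)$. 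Feeding smooth $\varphi$ into \eqref{coeffdef} and integrating by parts against the density $\pi$ of $W$ (implicit in the hypothesis) gives $(\tau\pi)'=-x\pi$, hence $\tau(x)\pi(x)=\int_x^\infty y\,\pi(y)\,dy\ge0$; thus $\tau\ge0$, $\tau$ is bounded, and the law of $W$ is invariant for the diffusion with generator $\ml f=\tau f''-xf'$.

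For the coupling, let $(B_s)_{s\in\rr}$ be a two-sided standard Brownian motion. On the same space I would realise the stationary solution $(\xi_s)_{s\le0}$ of $d\xi_s=-\xi_s\,ds+\sqrt{2\tau(\xi_s)}\,dB_s$ (so $\xi_s$ has the law of $W$ for every $s$) together with the stationary Ornstein--Uhlenbeck process $\eta_s=\sqrt{2\sigma^2}\int_{-\infty}^s e^{-(s-u)}\,dB_u$, which solves $d\eta_s=-\eta_s\,ds+\sqrt{2\sigma^2}\,dB_s$ and is driven by the \emph{same} $B$, and put $W:=\xi_0$, $Z:=\eta_0$. Then $Z\sim N(0,\sigma^2)$. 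Writing both equations in variation-of-constants form and sending the lower endpoint to $-\infty$ (using $W\in L^2$, so $e^{a}\xi_a\to0$ in $L^2$, and $\tau$ bounded) gives
\[
W-Z=\xi_0-\eta_0=\sqrt2\int_{-\infty}^0 e^{s}\bigl(\sqrt{\tau(\xi_s)}-\sigma\bigr)\,dB_s=:N_0,
\]
where $N_a:=\sqrt2\int_{-\infty}^a e^{s}(\sqrt{\tau(\xi_s)}-\sigma)\,dB_s$ is an $L^2$-bounded continuous martingale in $a\le0$ with $N_{-\infty}=0$ and quadratic variation
\[
[N]_0=2\int_{-\infty}^0 e^{2s}\bigl(\sqrt{\tau(\xi_s)}-\sigma\bigr)^2 ds\le\int_{-\infty}^0 e^{2s}\,\frac{2\bigl(\tau(\xi_s)-\sigma^2\bigr)^2}{\sigma^2}\,ds,
\]
the last step using $(\sqrt\tau-\sigma)^2=(\tau-\sigma^2)^2/(\sqrt\tau+\sigma)^2\le(\tau-\sigma^2)^2/\sigma^2$.

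Now fix $\theta\in\rr$. From $\theta N_0=\tfrac12\bigl(2\theta N_0-2\theta^2[N]_0\bigr)+\theta^2[N]_0$ and Cauchy--Schwarz,
\[
\ee\,e^{\theta N_0}\le\Bigl(\ee\,e^{\,2\theta N_0-2\theta^2[N]_0}\Bigr)^{1/2}\Bigl(\ee\,e^{\,2\theta^2[N]_0}\Bigr)^{1/2}\le\Bigl(\ee\,e^{\,2\theta^2[N]_0}\Bigr)^{1/2},
\]
since $e^{\,2\theta N_a-2\theta^2[N]_a}$ is a nonnegative supermartingale equal to $1$ at $a=-\infty$. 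Because $2e^{2s}\mathbf 1_{\{s\le0\}}\,ds$ is a probability measure on $(-\infty,0]$, the bound on $[N]_0$, Jensen, Fubini and then stationarity ($\xi_s\overset{d}{=}W$, $\tau(W)=\widetilde T$) give
\[
\ee\,e^{\,2\theta^2[N]_0}\le\int_{-\infty}^0 2e^{2s}\,\ee\exp\!\Bigl(\tfrac{2\theta^2(\tau(\xi_s)-\sigma^2)^2}{\sigma^2}\Bigr)ds=\ee\exp\!\Bigl(\tfrac{2\theta^2(\widetilde T-\sigma^2)^2}{\sigma^2}\Bigr).
\]
Running the same computation with $-\theta$ and using $e^{\theta|x|}\le e^{\theta x}+e^{-\theta x}$ gives $\ee\,e^{\theta|W-Z|}\le 2\bigl(\ee\exp(2\theta^2(\widetilde T-\sigma^2)^2/\sigma^2)\bigr)^{1/2}$; since this quantity is $\ge1$ it is at most $2\,\ee\exp(2\theta^2(\widetilde T-\sigma^2)^2/\sigma^2)\le 2\,\ee\exp(2\theta^2(T-\sigma^2)^2/\sigma^2)$ by the reduction above.

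The analytic core is routine: only the exponential supermartingale, Cauchy--Schwarz and Jensen are used — the promised ``easy inequalities''. The real obstacle is the construction in the middle paragraph: producing a stationary solution $\xi$ adapted to a prescribed two-sided Brownian motion when $\tau$ (equivalently $\pi$) is merely bounded and measurable, and justifying $e^{a}\xi_a\to0$, $N_a\to0$ and $\ee\,e^{\,2\theta N_{-\infty}-2\theta^2[N]_{-\infty}}=1$. I expect to handle this by first proving the theorem when $W$ has a smooth, strictly positive, rapidly decaying density — so $\tau$ is smooth, the diffusion is classical, and the pull-back limit as the initial time tends to $-\infty$ exists thanks to the contractive drift — and then removing the regularity by a routine approximation of a general $W$, passing to the limit in the final inequality.
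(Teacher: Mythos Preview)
Your approach is correct and genuinely different from the paper's. Both proofs rest on the same degenerate two-dimensional diffusion with generator
\[
\ml f(x,y)=\tau(x)f_{xx}+2\sigma\sqrt{\tau(x)}\,f_{xy}+\sigma^2 f_{yy}-xf_x-yf_y,
\]
whose invariant law couples $W$ with $N(0,\sigma^2)$; the difference lies in how the stationary law is produced and how the tail bound is extracted. The paper never writes down the diffusion: it obtains the invariant measure $\mu$ on $\rr^2$ abstractly via the Schauder--Tychonoff fixed point theorem (their Lemma~2.1, applied to the one-step transition of a discretised diffusion), and then derives the bound by a moment recursion, testing $\int \ml f\,d\mu=0$ against $f(x,y)=(x-y)^{2k}$ to get $\ee(X_1-X_2)^{2k}\le(2k-1)^k\ee(\sqrt{h(X_1)}-\sigma)^{2k}$ and summing. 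You instead realise the diffusion pathwise on $(-\infty,0]$, write $W-Z$ as an explicit stochastic integral, and replace the moment recursion by the exponential supermartingale plus Jensen against the probability weight $2e^{2s}\,ds$.

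What each buys: the paper's route completely sidesteps SDE well-posedness and the adaptedness issues you flag---the fixed-point argument needs only continuity and the bound $\|A\|\le b$, so it works directly for merely bounded measurable $\tau$ after a trivial mollification of $W$. Your route is more transparent about \emph{why} the confining drift matters (it manufactures the Jensen weight), yields the slightly sharper intermediate bound $\ee e^{\theta|W-Z|}\le 2\bigl(\ee\exp(2\theta^2(\widetilde T-\sigma^2)^2/\sigma^2)\bigr)^{1/2}$, and uses only off-the-shelf stochastic calculus. The technical debts you list (existence of the stationary weak solution, $e^a\xi_a\to0$, the limit $a\to-\infty$ in the supermartingale inequality via Fatou) are all routine once you restrict to smooth positive $\tau$ as you propose; note in particular that $[N]_a$ is deterministically bounded by $Me^{2a}$ since $\tau$ is bounded, which makes those limits painless. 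The final approximation step (perturb $W$ by $\varepsilon Y$ with $Y$ standard Gaussian, check that $T+\varepsilon^2$ is a Stein coefficient for $W+\varepsilon Y$, pass to a weak limit) is identical to the paper's.
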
 
Let us make a definition here, for the sake of convenience. Whenever $(W,T)$ is a pair of random variables satisfying \eqref{coeffdef}, we will say that $T$ is a {\it Stein coefficient} for $W$. 

The key idea, inspired by Stein's method of normal approximation \cite{stein86}, is that if  $T\simeq \sigma^2$ with high probability, then one can expect that $W$ is approximately Gaussian with mean zero and variance $\sigma^2$. This conclusion is heuristically justified because a random variable $Z$ follows the $N(0,\sigma^2)$ distribution if and only if $\ee(Z\varphi(Z)) = \sigma^2 \ee(\varphi'(Z))$ for all continuously differentiable $\varphi$  such that $\ee|\varphi'(Z)|< \infty$. Stein's method is a process of getting rigorous bounds out of this heuristic.

However, classical Stein's method can only give bounds on quantities like 
\[
\sup_{f\in \mf} |\ee f(W) - \ee f(Z)|,
\]
for various classes of functions $\mf$. This includes, for example, bounds on the total variation distance and the Wasserstein distance, and the Berry-Ess\'een bounds. Theorem \ref{coupling} seems to be of a fundamentally different nature. 

To see how Stein coefficients can be constructed in  a large array of situations, let us consider a few examples.
\medskip

\noindent {\bf Example 1.} Suppose $X$ is a random variable with $\ee(X) = 0$, $\ee(X^2)< \infty$, and following a density $\rho$ that is positive on an interval (bounded or unbounded) and zero outside. Let
\begin{equation}\label{hdef}
h(x) := \frac{\int_x^\infty y \rho(y) dy}{\rho(x)}
\end{equation}
on the support of $\rho$. 
Then, assuming ideal conditions and applying integration by parts, we have $\ee(X\varphi(X)) = \ee(\vp(X) h(X))$ for all Lipschitz $\varphi$.
Thus, $h(X)$ is a Stein coefficient for $X$. The  above  computation is carried out more precisely in Lemma \ref{ipartslemma} in Section \ref{couplingtool}. 
\medskip

\noindent{\bf Example 2.} Suppose $X_1,\ldots, X_n$ are i.i.d.\ copies of the random variable $X$ from the above example, and let $W = \frac{1}{\sqrt{n}} \sum_{i=1}^n X_i$. Then by Example 1,
\begin{align*}
\ee(W \varphi(W)) &= \frac{1}{\sqrt{n}}\sum_{i=1}^n \ee(X_i \varphi(W)) \\
&= \frac{1}{n}\sum_{i=1}^n \ee(h(X_i) \vp(W)) = \ee\biggl(\vp(W) \frac{1}{n}\sum_{i=1}^n h(X_i)\biggr).
\end{align*}
Thus, $\frac{1}{n}\sum_i h(X_i)$ is a Stein coefficient for $W$. Note that this becomes more and more like a constant as $n$ increases, and so we can use Theorem \ref{coupling} to get more and more accurate couplings.
\medskip

\noindent{\bf Example 3.} Suppose $\ve_1,\ldots,\ve_n$ are i.i.d.\ symmetric $\pm1$-valued r.v. Let $S_n = \sum_{i=1}^n \ve_i$.
Let $Y \sim \mathrm{Uniform}[-1,1]$. Let $W_n = S_n + Y$. Let
\[
T_n = n - S_nY + \frac{1-Y^2}{2}.
\]
It will be shown in the proof of Theorem \ref{coupling3} in Section \ref{tusnady} that $T_n$ is a Stein coefficient for $W_n$. (The construction of this $T_n$ is somewhat ad hoc. The author has not yet found a general technique for smoothening of discrete random variables in a way that can automatically generate a Stein coefficient.)
Letting $\sigma^2 = n$, Lemma \ref{coupling} tells us that it is possible to construct $Z_n\sim N(0,n)$ such that 
\[
\ee\exp(\theta|W_n-Z_n|) \le 2\;\ee\exp\biggl(\frac{2\theta^2(T_n-n)^2}{n}\biggr).
\]
Since $T_n = n + O(\sqrt{n})$ and $|W_n-S_n|\le 1$, it is now clear how to use Theorem \ref{coupling} to construct $S_n$ and $Z_n$ on the same probability space such that irrespective of $n$,
\[
\ee\exp(\theta |S_n-Z_n|) \le C
\]
for some fixed constants $\theta$ and $C$. By Markov's inequality, for all $x\ge 0$,
\[
\pp(|S_n-Z_n|\ge x) \le Ce^{-\theta x}.
\]
This is the first step in our proof of the KMT embedding theorem for the simple random walk.
\medskip

\noindent{\bf Example 4.} Suppose $\bbx = (X_1,\ldots,X_n)$ is a vector of  i.i.d.\ standard Gaussian random variables. Let $W = f(\bbx)$, where $f$ is absolutely continuous. Suppose $\ee(W) = 0$.
Let $\bbx' = (X_1',\ldots,X_n')$ be an independent copy of $\bbx$. Let
\[
T = \int_0^1\frac{1}{2\sqrt{t}}\sum_{i=1}^n \fpar{f}{x_i}(\bbx) \fpar{f}{x_i}(\sqrt{1- t}\bbx + \sqrt{t}\bbx') dt. 
\]
Then one can show that $T$ is a Stein coefficient for $W$ (see \cite{chatterjee08}, Lemma~5.3). This has been used to prove CLTs for linear statistics of eigenvalues of random matrices~\cite{chatterjee08}. 
\medskip

\noindent{\bf Example 5.} 
Theorem \ref{coupling} can be used to construct strong  couplings for sums of dependent random variables. An example of such a result is the following.
\begin{thm}\label{auto}
Suppose $X_1,\ldots, X_n, X_{n+1}$ are i.i.d.\ random variables with mean zero, variance $1$, and probability density $\rho$. Suppose $\rho$ is bounded above and below by positive constants on a compact interval, and zero outside. Let $S_n := \sum_{i=1}^n X_i X_{i+1}$.
Then it is possible to construct $S_n$ and a Gaussian random variable $Z_n\sim N(0, n)$ on the same probability space such that for all $x\ge 0$,
\[
\pp(|S_n - Z_n|\ge x) \le e^{-C(\rho) x},
\]
where $C(\rho)$ is a positive constant depending only on the density $\rho$ (and not on $n$). 
\end{thm}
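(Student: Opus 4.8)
The plan is to apply Theorem~\ref{coupling} to $W = S_n$ with $\sigma^2 = n$, so that the whole problem reduces to producing an almost surely bounded Stein coefficient $T$ for $S_n$ with $\ee(T) = n$ and with $(T - n)^2/n$ under good exponential control. Note first that $\ee(S_n) = \sum_{i=1}^n \ee(X_i)\ee(X_{i+1}) = 0$, and that $S_n$ is bounded (the $X_i$ live on a compact interval), hence square integrable, so the hypotheses of Theorem~\ref{coupling} on $W$ are in place; moreover, unlike in Example~3, no preliminary smoothing of $S_n$ is needed because $S_n$ already has a density. If one can find $T$ with $\ee(T) = n$, with $|T|$ bounded (by a constant allowed to depend on $n$), and with $\ee\exp\bigl(c(T-n)^2/n\bigr) \le C$ for constants $c, C > 0$ independent of $n$, then taking $\theta = \sqrt{c/2}$ in Theorem~\ref{coupling} gives $\ee\exp(\theta|S_n - Z_n|) \le 2C$ for a suitable version of $Z_n \sim N(0,n)$, and Markov's inequality then yields $\pp(|S_n - Z_n| \ge x) \le 2Ce^{-\theta x}$, which is the claimed bound (with an inessential multiplicative constant).

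To construct $T$, I would exploit the bilinear structure of $S_n = \sum_{i=1}^n X_i X_{i+1}$: since consecutive integers have opposite parities, no two coordinates of the same parity are ever multiplied together, so \emph{conditionally} on the even-indexed coordinates, $S_n = \sum_{i \text{ odd}} c_i X_i$ is a \emph{linear} combination of the still-independent, mean-zero, density-$\rho$ variables $\{X_i : i \text{ odd}\}$, where $c_i = X_{i-1} + X_{i+1}$ (with out-of-range terms omitted, e.g.\ $c_1 = X_2$) is a function of the even coordinates only. Let $h$ be the function from \eqref{hdef} attached to $\rho$; by Lemma~\ref{ipartslemma}, $h(X)$ is a Stein coefficient for a single copy of $X$. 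Applying this coordinate by coordinate inside the conditionally linear form (the computation of Example~2, now carrying the coefficients $c_i$) and then taking expectations gives, for every Lipschitz $\varphi$,
\[
\ee\bigl(S_n\varphi(S_n)\bigr) = \ee\Bigl(\vp(S_n)\sum_{i \text{ odd}} c_i^2\, h(X_i)\Bigr),
\]
so that $T := \sum_{i \text{ odd}} c_i^2\, h(X_i)$ is a Stein coefficient for $S_n$. Because $\rho$ is supported on a compact interval on which it is bounded below by a positive constant, $h$ is bounded there and the $X_i$ are bounded, so $|T|$ is bounded by a constant times $n$. For the mean: $\ee(c_i^2 h(X_i)) = \ee(c_i^2)\ee(h(X))$ by independence, $\ee(h(X)) = \ee(X^2) = 1$ (take $\varphi(x) = x$ in the one-variable identity), the cross term in $\ee(c_i^2)$ vanishes by independence and mean zero, and $\sum_{i \text{ odd}} \ee(c_i^2)$ simply counts, with multiplicities, all $n$ products occurring in $S_n$; hence $\ee(T) = n$.

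The one genuinely quantitative step, and the main obstacle, is the control of $(T-n)^2/n$. The key point is that $T = g(X_1,\dots,X_{n+1})$ has bounded differences: altering a single coordinate $X_k$ changes at most two of the summands $c_i^2 h(X_i)$ (the summand $i=k$ when $k$ is odd, or the two summands $i = k\pm1$ when $k$ is even), and each such change is bounded by a constant depending only on $\rho$ (through the diameter of the support of $\rho$ and $\sup|h|$). Applying McDiarmid's bounded-differences inequality to $T$, which has $n+1$ inputs each carrying a bounded-difference constant $O_\rho(1)$, gives $\pp(|T - n| \ge t) \le 2\exp(-c_\rho t^2/n)$ for some $c_\rho > 0$. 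Integrating this tail (e.g.\ via $\ee e^{\lambda Y^2} = 1 + \int_0^\infty 2\lambda t\, e^{\lambda t^2}\pp(|Y|\ge t)\,dt$ with $Y = T-n$ and $\lambda = c_\rho/2n$) produces $\ee\exp\bigl(\tfrac{c_\rho}{2n}(T-n)^2\bigr) \le 3$, uniformly in $n$, which is exactly the input required by the first paragraph (with $c = c_\rho/2$). This completes the proof. The construction of the Stein coefficient is entirely soft once the odd/even splitting is noticed, and I expect the bookkeeping behind the bounded-difference constants to be the only mildly delicate point.
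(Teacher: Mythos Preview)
Your argument is correct. The Stein coefficient you build via the odd/even bipartition, $T=\sum_{i\ \text{odd}} c_i^2\,h(X_i)$ with $c_i=X_{i-1}+X_{i+1}$, is valid, has mean $n$, is bounded by $O_\rho(n)$, and McDiarmid gives the needed subgaussian control of $(T-n)/\sqrt n$; Theorem~\ref{coupling} then finishes as you say.

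The paper takes a closely related but not identical route. Instead of splitting by parity, it integrates by parts on $X_i$ in \emph{each} summand $X_iX_{i+1}$, over all $i=1,\dots,n$, obtaining the Stein coefficient
\[
T_n=\sum_{i=1}^n h(X_i)\,X_{i+1}(X_{i-1}+X_{i+1}),\qquad X_0\equiv 0.
\]
For concentration it exploits a martingale structure that your $T$ does not have in the same form: with $D_i=h(X_i)X_{i+1}(X_{i-1}+X_{i+1})$ one has $\ee(D_i-1\mid X_1,\dots,X_{i-1})=0$, so $T_n-n$ is a sum of bounded martingale differences and the Azuma--Hoeffding inequality applies directly to the moment generating function. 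Your use of McDiarmid on the Doob filtration of $T$ is the bounded-differences counterpart of the same idea and yields the same $e^{-c t^2/n}$ tail. The trade-off: the paper's choice avoids the bipartite decomposition and gives a clean sequential martingale, while your choice makes $T$ manifestly nonnegative (each summand is $c_i^2 h(X_i)\ge 0$) and fits more transparently into the template of Example~2. Either way the quantitative input to Theorem~\ref{coupling} is identical.
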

The process $\{S_n\}$, upon proper scaling, is sometimes called the `autocorrelation process' for the sequence~$\{X_n\}$. It may be possible to use the above result to prove a KMT type coupling for autocorrelation processes. The proof of Theorem \ref{auto} is short enough to be presented right here.
\begin{proof}[Proof of Theorem \ref{auto}]
Let $X_0 \equiv 0$. Let $h$ be defined as in \eqref{hdef}. 
Then note that for any $\varphi$, the definition of $h$ and Example 1 show that 
\begin{align*}
\ee(S_n \varphi(S_n)) &= \sum_{i=1}^n \ee(X_i X_{i+1}\varphi(S_n))\\
&= \sum_{i=1}^n \ee(X_{i+1}(X_{i-1}+ X_{i+1})h(X_i)\vp(S_n)).
\end{align*}
This shows that if
\[
D_i:= h(X_i) X_{i+1}(X_{i-1}+X_{i+1}),
\]
then $T_n := \sum_{i=1}^n D_i$ is a Stein coefficient for $S_n$. Now, for any $1\le i\le n$, 
\begin{align*}
\ee(D_i-1 \mid X_1,\ldots, X_{i-1}) = \ee(h(X_i)) \ee(X_{i+1}^2)-1 = 0,
\end{align*}
since $\ee(h(X_i))=\ee(X_i^2)=1$. 
Moreover it is easy to show that by the assumed conditions on $\rho$ that $|D_i|$ is almost surely bounded by a constant depending on $\rho$. Therefore by the Azuma-Hoeffding inequality \cite{hoeffding63, azuma67} for sums of bounded martingale differences, we get that for each $\alpha \in \rr$, 
\[
\ee(e^{\alpha (T_n - n)}) \le e^{C_1(\rho) \alpha^2 n}
\] 
where $C_1(\rho)$ is some constant depending only on $\rho$. Thus if $Z$ is a standard Gaussian random variable, independent of all else, then for any $\alpha \in \rr$ 
\[
\ee(e^{\alpha Z(T_n - n)/\sqrt{n}}) \le \ee(e^{C_1(\rho)Z^2 \alpha^2}). 
\]
Therefore choosing $\alpha = C_2(\rho)$ small enough, one gets
\[
\ee(e^{C_2(\rho)Z(T_n - n)/\sqrt{n}}) \le 2.
\]
On the other hand, first conditioning on $T_n$ we get
\[
\ee(e^{C_2(\rho)Z(T_n - n)/\sqrt{n}}) = \ee(e^{C_2(\rho)^2(T_n - n)^2/2n}).
\]
By Theorem \ref{coupling}, this completes the proof. 
\end{proof}
\medskip 

\noindent {\bf Sketch of the proof of Theorem \ref{coupling}.} (Full details are given in Section~\ref{couplingtool}.)
First, let $h(W) := \ee(T|W)$. 
Then $h(W)$ is again a Stein coefficient for $W$. Moreover, one can show that the function $h$ is non-negative a.e.\ on the support of $W$. It is not difficult to verify that to prove Theorem \ref{coupling} it suffices to construct a coupling such that for all $\theta$,
\[
\ee\exp(\theta|W-Z|) \le 2\; \ee \exp\bigl(2\theta^2 \bigl(\sqrt{h(W)} - \sigma\bigr)^2\bigr).
\]
Fix a function $r:\rr^2 \ra \rr$. For $f\in C^2(\rr^2)$, let
\[
\ml f(x,y) := h(x) \spar{f}{x} + 2r(x,y) \mpar{f}{x}{y} + \sigma^2\spar{f}{y} - x\fpar{f}{x}-y \fpar{f}{y}.
\]
Suppose there exists a probability measure $\mu$ on $\rr^2$ such that for all~$f$,
\begin{equation}\label{inv}
\int_{\rr^2} \ml f \; d\mu = 0.
\end{equation}
The main idea is as follows: {\it every} choice of $r$ that admits a $\mu$ satisfying~ \eqref{inv} gives  a coupling of $W$ and $Z$.
Indeed, suppose $\mu$ is as above and $(X,Y)$ is a random vector with law $\mu$. Take any $\Phi\in C^2(\rr)$, and let $\varphi = \Phi'$. Putting $f(x,y) = \Phi(x)$ in \eqref{inv} gives
\[
\ee(h(X) \vp(X)) = \ee( X\varphi(X)).
\]
Since this holds for all $\varphi$ (which is a property that characterizes $W$) it is possible to argue that $X$ must have the same law as $W$.
Similarly, putting $f(x,y) = \Phi(y)$, we get
$\ee(Y\varphi(Y))= \sigma^2 \ee(\vp(Y))$, and 
thus, $Y\sim N(0,\sigma^2)$. Note that this argument did not depend on the choice of $r$ at all, except through the assumption that there exists a $\mu$ satisfying \eqref{inv}.

Now the question is, for what choices of $r$ does there exist a $\mu$ satisfying~\eqref{inv}? In Lemma \ref{schauder} it is proved that this is possible whenever the matrix 
\[
\left(
\begin{array}{cc}
h(x) & r(x,y)\\
r(x,y) & \sigma^2
\end{array}
\right)
\]
is positive semidefinite for all $(x,y)$, plus some extra conditions. Note that this is the same as saying that the operator $\ml$ is elliptic. 

Intuitively, the `best' coupling of $W$ and $Z$ is obtained when the choice of $r(x,y)$ is such that the matrix displayed above is the `most singular'. This choice is given by the geometric mean
\[
r(x,y) = \sigma \sqrt{h(x)}.
\] 
With this choice of $r$ and $f(x,y) = \frac{1}{2k}(x-y)^{2k}$ (where $k$ is an arbitrary positive integer), a small computation gives 
\[
\ml f(x,y) = (2k-1)(x-y)^{2k-2}(\sqrt{h(x)} - \sigma)^2 - (x-y)^{2k}.
\]
Since \eqref{inv} holds for this $f$, we get
\begin{align*}
\ee(X-Y)^{2k} &= (2k-1)\ee((X-Y)^{2k-2}(\sqrt{h(X)} - \sigma)^2)\\
&\le (2k-1) (\ee(X-Y)^{2k})^{(k-1)/k} (\ee(\sqrt{h(X)} - \sigma)^{2k})^{1/k}.
\end{align*}
This gives
\[
\ee(X-Y)^{2k} \le (2k-1)^k \ee(\sqrt{h(X)} - \sigma)^{2k}.
\]
It is now easy to complete the proof by combining over $k\ge 1$. 
\medskip

\noindent {\bf The KMT theorem for the SRW.}
As an application of Theorem \ref{coupling}, we give a new proof of Theorem \ref{mainthm} for the simple random walk. Although this is just a special case of the full theorem, it is important in its own right due to the importance of the SRW in various areas of science and mathematics. For instance, within the last ten years, the KMT embedding for the SRW played a pivotal role in the solution of a series of long-standing open questions about the simple random walk by the quartet of authors Dembo, Peres, Rosen, and Zeitouni \cite{dprz01, dprz04}. 

The proof of the KMT theorem for the SRW is obtained  using a combination of Theorem \ref{coupling}, Example 3, and an induction argument. The induction step involves proving the following theorem about sums of exchangeable binary variables. This seems to be a new result. 
\begin{thm}\label{mainthm2}
There exist positive universal constants $C$, $K$ and $\lambda_0$ such that the following is true. Take any integer $n\ge 2$. Suppose $\ve_1,\ldots,\ve_n$ are exchangeable $\pm 1$ random variables. For $k = 0,1,\ldots,n$, let $S_k = \sum_{i=1}^k \ve_i$ and let
$W_k = S_k - \frac{k}{n}S_n$.
It is possible to construct a version of $W_0,\ldots,W_n$ and a standard Brownian bridge $(\widetilde{B}_t)_{0\le t\le 1}$ on the same probability space such that for any $0 < \lambda < \lambda_0$, 
\[
\ee\exp(\lambda \max_{k\le n} |W_k - \sqrt{n}\widetilde{B}_{k/n}|) \le \exp(C\log n) \ee\exp\biggl(\frac{K\lambda^2 S_n^2}{n}\biggr).
\]
\end{thm}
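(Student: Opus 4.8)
\medskip
\noindent\textbf{Proof proposal.} I would argue by induction on $n$. Fix $n\ge 2$, assume the statement for all smaller values, and put $m:=\lfloor n/2\rfloor$. Condition throughout on $S_n$. The point is that, conditionally on $S_n$, an exchangeable $\pm1$ sequence is a uniformly random arrangement of $(n+S_n)/2$ plus-ones and $(n-S_n)/2$ minus-ones; consequently $S_m$ is hypergeometric, and, conditionally on $(S_n,S_m)$, the two blocks $(\ve_1,\dots,\ve_m)$ and $(\ve_{m+1},\dots,\ve_n)$ are independent uniform arrangements of their own multisets, so the induction hypothesis applies to each block (these being, in particular, exchangeable). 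The Brownian bridge has the matching structure: it is Gauss--Markov, so conditionally on $\widetilde{B}_{m/n}$ its restrictions to $[0,m/n]$ and $[m/n,1]$ are independent Brownian bridges with the given endpoint values, which rescale to standard Brownian bridges. Writing $W_k=(S_k-\tfrac km S_m)+\tfrac km W_m$ for $k\le m$, and $\sqrt n\,\widetilde{B}_{k/n}=\sqrt m\,\widetilde{B}^{(1)}_{k/m}+\tfrac km\sqrt n\,\widetilde{B}_{m/n}$ in the same way (symmetrically on the right block), a subtraction gives, with $M^L,M^R$ the analogous maxima for the two rescaled blocks,
\[
M_n:=\max_{k\le n}\bigl|W_k-\sqrt n\,\widetilde{B}_{k/n}\bigr|\ \le\ \max(M^L,M^R)+\bigl|W_m-\sqrt n\,\widetilde{B}_{m/n}\bigr|.
\]
Thus the whole burden is a single midpoint coupling, of $W_m$ with $\sqrt n\,\widetilde{B}_{m/n}$, plus bookkeeping.

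For the midpoint, set $\tau^2:=\var(W_m\mid S_n)=\tfrac{m(n-m)(n^2-S_n^2)}{n^2(n-1)}$ and split into two cases. If $\tau^2\ge\kappa n$ for a fixed small $\kappa>0$, then $\tau^2\asymp n$; I would add a small independent uniform perturbation $U$ (in the spirit of Example~3) to form the smoothed variable $\widehat{W}_m:=W_m+U$, which is centered conditionally on $S_n$, and exhibit an explicit Stein coefficient $T$ for it --- I expect $T$ to be of the form $T=\tau^2+(\text{bounded})+(\text{bounded multiple of }W_m)$, with $|T|$ a.s.\ bounded. Then $(T-\tau^2)^2/\tau^2\le c\,W_m^2/\tau^2+c'$, and since a hypergeometric $W_m$ is sub-Gaussian at scale $\sqrt{\tau^2}$ conditionally on $S_n$, the Gaussian-integral trick gives $\ee\bigl[\exp(\beta W_m^2/\tau^2)\mid S_n\bigr]=O(1)$ for small $\beta$; Theorem~\ref{coupling} therefore produces $\widetilde{G}\sim N(0,\tau^2+O(1))$ on the same space with $\ee\bigl[\exp(\theta|\widehat{W}_m-\widetilde{G}|)\mid S_n\bigr]=O(1)$ uniformly, for $\theta$ in a fixed neighbourhood of $0$. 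Since the target increment $\sqrt n\,\widetilde{B}_{m/n}$ has variance $\tfrac{m(n-m)}{n}$, I rescale: let $G$ be the deterministic multiple of $\widetilde{G}$ with that variance and declare $\sqrt n\,\widetilde{B}_{m/n}:=G$; the residual $|\widetilde{G}-G|$ is, conditionally on $S_n$, a centered Gaussian whose variance is $O(S_n^2/n+1)$ --- here one uses only $S_n^2\le n^2$ to bound the square of the scaling factor $1-\sqrt{n(n-1)/(n^2-S_n^2)}$. Altogether $|W_m-\sqrt n\,\widetilde{B}_{m/n}|\le|\widehat{W}_m-\widetilde{G}|+|U|+|\widetilde{G}-G|$ contributes an $O(1)$ sub-exponential factor times $\exp\!\bigl(O(\lambda^2)(S_n^2/n+1)\bigr)$ to exponential moments. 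In the complementary case $\tau^2<\kappa n$ --- which forces $|S_n|$ to be within a fixed fraction of $n$, since $\tau^2\gtrsim(n^2-S_n^2)/n$ --- I couple crudely: $|W_m-\sqrt n\,\widetilde{B}_{m/n}|\le|W_m|+|\sqrt n\,\widetilde{B}_{m/n}|$, and $W_m$ (sub-Gaussian at scale $\le\sqrt n$) and $\sqrt n\,\widetilde{B}_{m/n}$ (Gaussian, variance $\le n$) give $\ee\bigl[\exp(\lambda|W_m-\sqrt n\,\widetilde{B}_{m/n}|)\mid S_n\bigr]\le e^{c''\lambda^2 n}$, which will be harmless because on this event $S_n^2/n\ge(1-4\kappa)n$ is itself of order $n$.

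Now close the recursion. Using $\exp(\lambda\max(a,b))\le e^{\lambda a}+e^{\lambda b}$, then conditioning on $(S_n,S_m)$ and the midpoint coupling and invoking the induction hypothesis for the two (deterministic-length) blocks,
\[
\ee\exp(\lambda M_n)\ \le\ \bigl(e^{C\log m}+e^{C\log(n-m)}\bigr)\;\ee\Bigl[\exp\!\bigl(\lambda|W_m-\sqrt n\,\widetilde{B}_{m/n}|+K\lambda^2(S_m^2/m\vee(S_n-S_m)^2/(n-m))\bigr)\Bigr].
\]
Since $m,n-m\le\lceil n/2\rceil$, the tree-factor is at most $2^{1-C}(1+1/n)^C e^{C\log n}$; and, conditionally on $S_n$, the hypergeometric variables $S_m$ and $S_n-S_m$ concentrate around $S_n/2$ at scale $\sqrt n$, so (using $2ab\le\tfrac14 a^2+4b^2$ on the cross term) $S_m^2/m\vee(S_n-S_m)^2/(n-m)\le\beta\,S_n^2/n+R$ with $\beta<1$ and $R$ having finite conditional exponential moments. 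Combining with the midpoint estimate and choosing, in order, $K$ large (so $\beta K\lambda^2$ plus the $O(\lambda^2)$ coming from $R$ and from the midpoint does not exceed $K\lambda^2$, and also so that the crude case is absorbed as above), then $\lambda_0$ small (so the Gaussian-integral estimates apply), then $C$ large (so $2^{1-C}(1+1/n)^C$ times the resulting $O(1)$ constant is $\le1$), the right-hand side is at most $e^{C\log n}\,\ee\exp(K\lambda^2 S_n^2/n)$, as required. The base cases $n\le3$ are immediate, all quantities being $O(1)$.

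The step I expect to be the real obstacle is the construction of the Stein coefficient $T$ for the smoothed hypergeometric $\widehat{W}_m$: settling on the right smoothing and verifying that $|T|$ is bounded and that $T-\tau^2$ decomposes into a bounded part plus a bounded multiple of $W_m$. This is the analogue, for sampling without replacement, of the frankly ad hoc choice $T_n=n-S_nY+\tfrac{1-Y^2}{2}$ that works for the binomial in Example~3, and for which (as the paper already notes) there is no general recipe. Everything else is routine: the only other point needing care is to route the accumulated errors and the $S_n^2/n$ correction through the $O(\log n)$ levels without losing a power of $\log n$, which is precisely why the recursion is organised around $\exp(\lambda\max(a,b))\le e^{\lambda a}+e^{\lambda b}$ and the per-split contraction $2^{1-C}<1$ rather than around a product of the two block-bounds.
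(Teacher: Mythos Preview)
Your proposal is essentially the paper's own argument: induction on $n$, split at the midpoint, couple $W_m$ with a Gaussian via Theorem~\ref{coupling} applied to a uniformly-smoothed hypergeometric, apply the hypothesis on the two blocks, and close the loop using a sub-Gaussian contraction $\ee[\exp(\alpha S_m^2/m)\mid S_n]\le \exp(1+\beta\alpha S_n^2/n)$ with $\beta<1$ (this is exactly the paper's Lemma~\ref{moment2}, with $\beta=3/4$). Your use of $e^{\lambda(a\vee b)}\le e^{\lambda a}+e^{\lambda b}$ to buy the geometric $2\cdot(3/2)^{-C}$ slack per level is the same bookkeeping the paper uses in the induction step (Theorem~\ref{maincoupling}).

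The one place where the paper is cleaner is the midpoint coupling. You aim for $N(0,\tau^2)$ with $\tau^2=\var(W_m\mid S_n)$, then rescale to $N(0,m(n-m)/n)$, which forces the case split on $\tau^2$. The paper instead couples directly with the target variance $\sigma^2=m(n-m)/n$: the explicit Stein coefficient it finds for the smoothed $W_m$ (Theorem~\ref{coupling2}) is
\[
T=\frac{m(n-m)}{n}-\frac{S_m(S_n-S_m)}{n}-W_mY+\frac{1-Y^2}{2},
\]
built from the exchangeable-pair identity $W_m=\tfrac1n\sum_{i\le m<j}(\ve_{\pi(i)}-\ve_{\pi(j)})$ and the same one-variable smoothing lemma as in Example~3. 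The point is that $T$ concentrates around $m(n-m)/n$, not $\tau^2$; the deviation $(T-\sigma^2)^2/\sigma^2$ is bounded by a universal multiple of $S_m^2/m+S_n^2/n+1$, and the $S_m^2/m$ part is then absorbed via Lemma~\ref{moment2}. This yields the uniform bound $\ee\exp(\theta|W_m-Z_m|)\le\exp(1+c\theta^2 S_n^2/n)$ for all values of $S_n$, with no rescaling and no degenerate case. Your variance-matching-then-rescaling route would also go through, but it is the detour the paper avoids.
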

Note that by Example 2, it is possible to use Theorem \ref{coupling} and induction whenever the summands have a density with respect to Lebesgue measure and the function $h$ is reasonably well-behaved. This holds, for instance, for log-concave densities, or densities of the type considered in Theorem \ref{auto}. In such cases it is not very difficult (although technically messier than the binary case) to prove a version of Theorem \ref{mainthm2} using the method of this paper. However, we do not know yet how to use Theorem~\ref{coupling} to prove the KMT theorem in its full generality, because we do not know how to generalize the smoothing technique of Example 3.

The theorem that we prove about the KMT coupling for the SRW, stated below, is somewhat stronger than existing results.
\begin{thm}\label{ourthm}
Let  $\ve_1,\ve_2,\ldots$ be i.i.d.\ symmetric $\pm1$-valued random variables. For each $k$, let $S_k := \sum_{i=1}^k \ve_i$. It is possible to construct a version of the sequence $(S_k)_{k\ge 0}$ and a standard Brownian motion $(B_t)_{t\ge 0}$  on the same probability space such that for all $n$ and all $x\ge 0$,
\[
\pp\bigl(\max_{k\le n} |S_k - B_k| \ge C\log n + x\bigr) \le K e^{-\lambda x},
\] 
where $C$, $K$, and $\lambda$ do not depend on $n$.
\end{thm}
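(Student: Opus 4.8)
The plan is to deduce Theorem~\ref{ourthm} by combining the endpoint coupling of Example~3 and Theorem~\ref{coupling} with the bridge coupling of Theorem~\ref{mainthm2}, glued together by the elementary fact that a standard Brownian motion on $[0,N]$ is a standard Brownian bridge plus an \emph{independent} linear interpolation of its terminal value. First, for a fixed $N$, I would invoke Example~3 and apply Theorem~\ref{coupling} with $W=S_N+Y$, $T=T_N$ and $\sigma^2=N$: reading $|W-S_N|\le 1$ and $|T_N-N|\le |S_N|+1$ off the explicit formula for $T_N$, and using that $S_N/\sqrt N$ is uniformly sub-Gaussian (so $\ee\exp(cS_N^2/N)$ is bounded in $N$ for small $c$), this produces, on a space carrying $(S_k)_{k\le N}$, a variable $Z_N\sim N(0,N)$ with $\ee\exp(\theta|S_N-Z_N|)\le C$ for a fixed small $\theta$ and all $N$. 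Since the increments of the walk are in particular exchangeable $\pm1$ variables, Theorem~\ref{mainthm2} then furnishes a standard Brownian bridge $\widetilde B$ coupled to $W_k:=S_k-\frac kN S_N$ with
\[
\ee\exp\Bigl(\lambda\max_{k\le N}\bigl|W_k-\sqrt N\,\widetilde B_{k/N}\bigr|\Bigr)\le\exp(C\log N)\,\ee\exp\bigl(K\lambda^2 S_N^2/N\bigr).
\]

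Since Theorem~\ref{mainthm2} outputs a \emph{bona fide} standard bridge — a bridge whose law does not depend on the exchangeable sequence it is coupled to, and which is therefore independent of $S_N$ — I can realize $\widetilde B$ jointly with $Z_N$ so that $\widetilde B$ and $Z_N$ are independent. Then $B_t:=\sqrt N\,\widetilde B_{t/N}+\frac tN Z_N$, $0\le t\le N$, is a genuine standard Brownian motion, and
\[
S_k-B_k=\bigl(W_k-\sqrt N\,\widetilde B_{k/N}\bigr)+\frac kN\,(S_N-Z_N),
\]
so $\max_{k\le N}|S_k-B_k|\le\max_{k\le N}|W_k-\sqrt N\,\widetilde B_{k/N}|+|S_N-Z_N|$. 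Combining the two exponential-moment estimates by Cauchy--Schwarz and invoking the sub-Gaussianity of $S_N/\sqrt N$ once more bounds $\ee\exp(\lambda\max_{k\le N}|S_k-B_k|)$ by $C_1 N^{C_2}$ for a fixed small $\lambda$, and Markov's inequality then gives universal constants $C_\ast,K_\ast,\lambda_\ast$ with $\pp(\max_{k\le N}|S_k-B_k|\ge C_\ast\log N+x)\le K_\ast e^{-\lambda_\ast x}$ for all $x\ge0$.

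To upgrade this to a single coupling valid for every $n$, I would take blocks of geometrically growing length, $\ell_j=2^j$, set $n_j=\ell_1+\cdots+\ell_j$, apply the fixed-horizon construction with independent auxiliary randomness to the simple random walk formed by the increments over each block $(n_{j-1},n_j]$, and concatenate the resulting Brownian pieces; the concatenation is a standard Brownian motion (independent increments) and the walk pieces reassemble into a version of $(S_k)_{k\ge0}$. For $k$ in block $J=J(n)$ one has $|S_k-B_k|\le\sum_{i<J}d_i+\max_{j\le J}D_j$, where $d_i$ is the discrepancy at the \emph{right endpoint} of block $i$ and $D_j$ the maximal discrepancy inside block $j$. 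The key point is that each $d_i$ is an endpoint quantity of $|S_{\ell_i}-Z_{\ell_i}|$-type, hence controlled at the $O(1)$ scale with an exponential tail, whereas the $\log\ell_J$ price is paid only in the last block. Since $J(n)=O(\log n)$ and $\log\ell_J=O(\log n)$, the sum of the $O(\log n)$ independent $O(1)$-scale variables $d_i$ is $O(\log n)$ with an exponential tail, and $\max_{j\le J}D_j$ is likewise $O(\log n)$ with an exponential tail after a union bound over the $O(\log n)$ blocks; adding the two yields $\pp(\max_{k\le n}|S_k-B_k|\ge C\log n+x)\le Ke^{-\lambda x}$ for all $n$ and $x\ge0$ (finitely many small $n$ absorbed into $K$), which is Theorem~\ref{ourthm}.

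I expect essentially all of the depth of the theorem to lie in Theorem~\ref{mainthm2}, taken here as given: that is the inductive core where Theorem~\ref{coupling} is applied across dyadic scales. Granting it, the two points that need genuine care in the assembly are, first, realizing the bridge $\widetilde B$ and the Gaussian $Z_N$ jointly with $\widetilde B\perp Z_N$ — which is exactly why it matters that Theorem~\ref{mainthm2} produces an honest standard bridge, independent of $S_N$, so that $\sqrt N\,\widetilde B_{\cdot/N}+\frac{\cdot}{N}Z_N$ is a true Brownian motion — and second, arranging the block stitching so that the cumulative error stays $O(\log n)$ rather than degrading to $O((\log n)^2)$; the latter works because only the $O(1)$-scale endpoint discrepancies are propagated from one block to the next, the $O(\log n)$ cost being incurred only inside the block currently under consideration.
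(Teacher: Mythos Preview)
Your proposal is correct. The finite-$N$ construction---couple $S_N$ to $Z_N\sim N(0,N)$ via Example~3/Theorem~\ref{coupling}, couple the centered walk to a bridge via Theorem~\ref{mainthm2}, glue conditionally on $S_N$ so that $\widetilde B\perp Z_N$ and set $B_t=\sqrt{N}\widetilde B_{t/N}+\tfrac{t}{N}Z_N$---is exactly the paper's Lemma~\ref{mainlmm}; your observation that the bridge's marginal law is the same for every value of $S_N$, hence $\widetilde B\perp S_N$, is precisely the mechanism that makes the paper's product density $\gamma^n(s,z,\bs,\bz)=\psi^n(s,z)\rho_s^n(\bs,\bz)$ produce the correct Gaussian marginals.

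Where you diverge is in the blocking for the infinite-horizon coupling. The paper takes \emph{doubly} exponential blocks $m_r=2^{2^r}$ and runs an induction on $r$: since $\log m_{r-1}=\tfrac12\log m_r$, the contribution from earlier blocks is geometrically dominated by the current one, and the number of blocks up to $n$ is only $O(\log\log n)$, so the accumulated endpoint errors $\sum_{\ell\le r}|S^{(\ell)}_{n_\ell}-Z^{(\ell)}_{n_\ell}|$ contribute a factor $B^r=\exp(O(\log\log n))$. You instead take singly geometric blocks $\ell_j=2^j$, giving $J(n)=O(\log n)$ blocks; the accumulated endpoint error is then genuinely of order $\log n$, and you handle the in-block maxima by a union bound over $O(\log n)$ blocks with $\sum_j \ell_j^B\le Cn^B$. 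Both arguments land on $\ee\exp(\lambda\max_{k\le n}|S_k-B_k|)\le C_1 n^{C_2}$. Your route is a bit more elementary (no induction, just Markov and union bounds), while the paper's doubly exponential scheme buys a sharper accounting of the endpoint accumulation---$O(\log\log n)$ instead of $O(\log n)$---which is immaterial for the final statement but explains why the paper can afford to be loose in writing ``$r\le\text{const.}\log m_r$'' at the end.
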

The above result is stronger than the corresponding statement about the SRW implied by Theorem \ref{mainthm} because it gives a single coupling for the whole process, instead of giving different  couplings for different $n$. Such results have been recently established in the KMT theorem for summands with finite $p$th moment \cite{lifshits07, zaitsev08}.

The paper is organized as follows. In Section \ref{couplingtool}, we prove Theorem \ref{coupling}. Two versions of Example 3 are worked out in Section \ref{tusnady}. The main induction step, which proves Theorem \ref{mainthm2}, is carried out in Section \ref{induction}. Finally, the proof of Theorem~\ref{ourthm} is completed in Section \ref{complete}.


\section{Proof of Theorem \ref{coupling}}\label{couplingtool}
The proof will proceed as a sequence of lemmas. The lemmas will not be used in the subsequent sections, and only Theorem~\ref{coupling} is relevant for the future steps.

\begin{lmm}\label{schauder}
Let $n$ be a positive integer, and suppose $A$ is a continuous map from $\rr^n$ into the set of $n\times n$ positive semidefinite matrices. Suppose there exists a constant $b\ge 0$  such that for all $x\in \rr^n$,
\[
\|A(x)\|\le b.
\]
Then there exists a probability measure $\mu$ on $\rr^n$ such that if $X$ is a random vector following the law $\mu$, then
\begin{equation}\label{expbd}
\ee\exp\avg{\theta, X} \le \exp(b \|\theta\|^2)
\end{equation}
for all $\theta \in \rr^n$, and 
\begin{equation}\label{iparts2}
\ee\avg{X, \nabla f(X)} = \ee \tr(A(X) \hess f(X))
\end{equation}
for all $f\in C^2(\rr^n)$ such that the expectations $\ee|f(X)|^2$, $\ee\|\nabla f(X)\|^2$, and $\ee|\tr(A(X)\hess f(X))|$ are finite.
Here $\nabla f$ and $\hess f$ denote the gradient and Hessian of $f$, and $\tr$ stands for the trace of a matrix. 
\end{lmm}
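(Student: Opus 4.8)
The plan is to obtain $\mu$ as the invariant distribution of the mean--reverting diffusion whose generator is the general (``$n$-dimensional'') analogue of the operator from the Introduction,
\[
\ml f(x) := \tr\bigl(A(x)\,\hess f(x)\bigr) - \avg{x,\nabla f(x)} ,
\]
so that \eqref{iparts2} is precisely the stationarity relation $\int \ml f\,d\mu = 0$. Since $A$ is merely continuous and positive semidefinite (neither elliptic nor Lipschitz), I would first regularize: set $A_\ve := \eta_\ve * A + \ve I$ with $\eta_\ve$ a standard mollifier, so that each $A_\ve$ is smooth, positive definite, bounded in operator norm by $b+\ve$, and $A_\ve \to A$ locally uniformly as $\ve \downarrow 0$. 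Smoothness, boundedness, and uniform positive definiteness make the matrix square root of $A_\ve$ globally Lipschitz, so $dX_t = \sqrt{2A_\ve(X_t)}\,dB_t - X_t\,dt$ has a unique strong solution defining a Feller diffusion with generator $\ml_\ve f = \tr(A_\ve\,\hess f) - \avg{x,\nabla f}$.

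Applying $\ml_\ve$ to $V(x) = 1+|x|^2$ gives $\ml_\ve V \le 2n(b+\ve) + 2 - 2V$, whence $\sup_t \ee\,V(X_t) < \infty$, the time averages of the law of $X_t$ are tight, and Krylov--Bogolyubov yields an invariant probability $\mu_\ve$ with $\int \ml_\ve f\,d\mu_\ve = 0$ for $f \in C^2_c(\rr^n)$. Running the same computation with $V(x) = \cosh\avg{\theta,x}$ shows in addition that $\mu_\ve$ has finite exponential moments of every order, with constants that do not involve $\ve$; this also legitimizes extending the invariance identity to all $f \in C^2$ whose value, gradient and Hessian grow at most exponentially. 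Now fix $\theta$ and put $g(s) := \ee_{\mu_\ve}\exp(s\avg{\theta,X})$; inserting $f(x) = e^{s\avg{\theta,x}}$ into the invariance identity gives $s\,g'(s) = s^2\,\ee_{\mu_\ve}\!\bigl[\avg{\theta,A_\ve(X)\theta}\,e^{s\avg{\theta,X}}\bigr] \le (b+\ve)\|\theta\|^2\,s^2\,g(s)$, hence $(\log g)'(s) \le (b+\ve)\|\theta\|^2 s$, and since $g(0)=1$ we get $\ee_{\mu_\ve}\exp\avg{\theta,X} = g(1) \le \exp\!\bigl(\tfrac12(b+\ve)\|\theta\|^2\bigr)$, which is even slightly stronger than \eqref{expbd}.

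This uniform bound makes $\{\mu_\ve\}$ tight, so along a subsequence $\mu_{\ve_k}\Rightarrow\mu$; uniform integrability of $e^{\avg{\theta,X}}$ then transfers \eqref{expbd} to $\mu$. For \eqref{iparts2} I would first take $f \in C^2_c(\rr^n)$, for which $\avg{x,\nabla f(x)}$ and $\tr(A\,\hess f)$ are bounded and continuous and $A_{\ve_k}\to A$ uniformly on the support of $f$, so the limit of $\ee_{\mu_{\ve_k}}\avg{X,\nabla f} = \ee_{\mu_{\ve_k}}\tr(A_{\ve_k}\hess f)$ gives \eqref{iparts2}. To pass to an arbitrary admissible $f$, apply this to $f\chi_R$, where $\chi_R$ is a smooth cutoff equal to $1$ on $B_R$, supported in $B_{2R}$, with $\|\nabla\chi_R\|_\infty = O(1/R)$ and $\|\hess\chi_R\|_\infty = O(1/R^2)$. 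Expanding $\nabla(f\chi_R)$ and $\hess(f\chi_R)$, the ``principal'' contributions $\chi_R\nabla f$ and $\chi_R\hess f$ converge by dominated convergence, with dominating functions $|X|\,\|\nabla f(X)\|$ (integrable by Cauchy--Schwarz, using $\ee\|\nabla f(X)\|^2 < \infty$ and the sub-Gaussian tail of $\mu$) and $|\tr(A(X)\hess f(X))|$ (integrable by hypothesis); every remaining term is supported on $\{R \le |X| \le 2R\}$ and is bounded there by a fixed multiple of $|f(X)|$ or $\|\nabla f(X)\|$, so it tends to $0$ as $R \to \infty$ because $|f(X)|,\|\nabla f(X)\| \in L^2(\mu)$ while $\mu(|X| \ge R)$ decays like a Gaussian tail.

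The regularization bookkeeping is routine, so the real obstacle should be the last truncation step: one must check that the cutoff errors are absorbed \emph{exactly} by the three moment conditions on $f$ together with the Gaussian decay of $\mu$ --- which is presumably why the hypotheses are stated via $\ee|f(X)|^2$, $\ee\|\nabla f(X)\|^2$ and $\ee|\tr(A(X)\hess f(X))|$ rather than in weaker $L^1$ form. (If one prefers an analytic construction of $\mu_\ve$, one can instead solve the stationary Fokker--Planck equation $\ml_\ve^*\rho_\ve = 0$ on expanding balls and pass to the limit using interior Schauder estimates, which is plausibly the origin of the lemma's label.)
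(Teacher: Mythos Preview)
Your argument is correct and reaches the same conclusion, but by a genuinely different route from the paper. The paper does \emph{not} construct a diffusion or invoke Krylov--Bogolyubov; it works in discrete time. For each $\ve\in(0,1)$ it defines the map $T_\ve$ sending the law of $X$ to the law of $(1-\ve)X+\sqrt{2\ve A(X)}\,Z$ (one Euler step of your SDE), checks directly that $T_\ve$ preserves the convex weak-$*$ compact set $K=\{\mu:\int e^{\langle\theta,x\rangle}\mu(dx)\le e^{b\|\theta\|^2}\ \text{for all }\theta\}$, and applies the Schauder--Tychonoff fixed point theorem to obtain $\mu_\ve\in K$ with $T_\ve\mu_\ve=\mu_\ve$ --- hence the lemma's label. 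A second-order Taylor expansion of $f(X_\ve+Y_\ve)-f(X_\ve)$, where $Y_\ve=-\ve X_\ve+\sqrt{2\ve A(X_\ve)}\,Z$, then shows that any cluster point of $\{\mu_\ve\}$ as $\ve\to0$ satisfies \eqref{iparts2}. No regularization of $A$ is needed: the map $T_\ve$ requires only continuity of $A$ and of $A\mapsto\sqrt{A}$, not Lipschitz coefficients.

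Your approach trades the fixed-point theorem for SDE existence theory plus an extra regularization $A_\ve=\eta_\ve*A+\ve I$; in exchange you extract the sharper bound $\exp\bigl(\tfrac12(b+\ve)\|\theta\|^2\bigr)$ via the ODE for $g(s)$, whereas the paper's verification that $T_\ve K\subset K$ only yields the stated constant $b$. The final truncation step --- passing from compactly supported $f$ to general admissible $f$ via a cutoff $f\chi_R$ --- is essentially identical in both proofs, and your reading of why the hypotheses are stated as $L^2$ conditions on $f$ and $\nabla f$ (rather than $L^1$) is exactly right.
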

\begin{proof}
Let $K$ denote the set of all probability measures $\mu$ on $\rr^n$ satisfying 
\[
\int x\mu(dx) = 0 \ \text{ and } \ \int \exp\smallavg{\theta, x} \mu(dx) \le \exp(b\|\theta\|^2) \ \text{for all $\theta\in\rr^n$}.
\]
It is easy to see by the Skorokhod representation theorem and Fatou's lemma that $K$ is a (nonempty) compact subset of the space $V$ of all finite signed measures on $\rr^n$ equipped with the topology of weak-* convergence (that is, the locally convex Hausdorff topology generated by the separating family of seminorms $|\mu|_f := |\int fd\mu|$, where $f$ ranges over all continuous functions with compact support). Also, obviously, $K$ is convex.

Now fix $\ve \in (0,1)$. Define a map $T_\ve : K \ra V$ as follows. Given $\mu\in K$, let $X$ and $Z$ be two independent random vectors, defined on some probability space, with $X\sim \mu$ and $Z$ following the standard gaussian law on $\rr^n$. Let $T_\ve\mu$ be the law of the random vector
\[
(1-\ve) X + \sqrt{2\ve A(X)} Z,
\]
where $\sqrt{A(X)}$ denotes the positive semidefinite square root of the matrix $A(X)$. Then for any $\theta \in\rr^n$,
\begin{align*}
\int \exp{\smallavg{\theta,x}} T_\ve\mu(dx) &= \ee\exp\avg{\theta, (1-\ve) X + \sqrt{2\ve A(X)} Z}\\
&= \ee\exp\bigl(\avg{\theta, (1-\ve) X} + \ve\avg{\theta, A(X)\theta}\bigr)\\
&\le \exp(b\ve\|\theta\|^2) \ee\exp\avg{\theta, (1-\ve) X}\\
&\le \exp(b\ve\|\theta\|^2 + b (1-\ve)^2 \|\theta\|^2).
\end{align*}
For $\ve\in (0,1)$, $1-\ve+\ve^2 \le 1$. Hence, $b\ve + b(1-\ve)^2 \le b$, and therefore $T_\ve$ maps $K$ into $K$. Since $A$ is a continuous map, and the transformation $A\mapsto \sqrt{A}$ is continuous (see e.g.\ \cite{bhatia97}, page 290, equation (X.2)), it is easy to see that $T_\ve$ is continuous under the weak-* topology. Hence, by the Schauder-Tychonoff fixed point theorem for locally convex topological vector spaces (see e.g.\ \cite{dunfordschwartz58}, Chapter V, 10.5), we see that $T_\ve$ must have a fixed point in  $K$. For each $\ve\in (0,1)$, let $\mu_\ve$ be a fixed point of $T_\ve$, and let $X_\ve$ denote a random vector following the law $\mu_\ve$. 

Now take any $f\in C^2(\rr^n)$ with $\nabla f$ and $\hess f$ bounded and uniformly continuous. Fix $\ve\in (0,1)$, and let
\[
Y_\ve = -\ve X_\ve + \sqrt{2\ve A(X_\ve)} Z.
\]
By the definition of $T_\ve \mu$, note that 
\begin{equation}\label{eqzero}
\ee\bigl(f(X_\ve + Y_\ve) - f(X_\ve)\bigr) = 0.
\end{equation}
Now let
\begin{align*}
\mathcal{R}_\ve &= f(X_\ve + Y_\ve) - f(X_\ve) - \avg{Y_\ve,  \nabla f(X_\ve)}  -   \frac{1}{2}\avg{Y_\ve, \hess f(X_\ve)\, Y_\ve}.
\end{align*}
First, note that
\begin{equation}\label{term1}
\ee\avg{Y_\ve, \nabla f(X_\ve)} = - \ve\ee\avg{X_\ve, \nabla f(X_\ve)}.
\end{equation}
By the definition of $K$, all moments of $\|X_\ve\|$ are bounded by constants that do not depend on $\ve$. Hence, as $\ve \ra 0$, we have
\begin{equation}\label{term2}
\begin{split}
\ee \avg{Y_\ve, \hess f(X_\ve)\, Y_\ve} &= 2\ve\ee\tr(\sqrt{A(X_\ve)} \hess f(X_\ve)\sqrt{A(X_\ve)}) + O(\ve^{3/2})\\
&= 2\ve\ee\tr(A(X_\ve) \hess f(X_\ve)) + O(\ve^{3/2}).
\end{split}
\end{equation}
Now, by the boundedness and uniform continuity of $\hess f$, one can see that
\[
|\mathcal{R}_\ve| \le \|Y_\ve\|^2 \delta(\|Y_\ve\|),
\]
where $\delta:[0,\infty)\ra [0,\infty)$ is a bounded function satisfying $\lim_{t\ra 0} \delta(t) = 0$. Now, by the nature of $K$, it is easy to verify that the moments of $\ve^{-1}\|Y_\ve\|^2$ can be bounded by constants that do not depend on $\ve$. Combining this with the above-mentioned properties of $\delta$ and the fact that $\|Y_\ve\|\ra 0$ in probability as $\ve\ra 0$, we get 
\begin{equation}\label{term3}
\lim_{\ve\ra 0} \ve^{-1}\ee|\mathcal{R}_\ve| = 0.
\end{equation}
Now let $\mu$ be a cluster point of the collection $\{\mu_\ve\}_{0< \ve < 1}$ as $\ve \ra 0$, and let $X$ denote a random variable following the law $\mu$. Such a cluster point exists because $K$ is a compact set. By uniform integrability, equations \eqref{eqzero}, \eqref{term1}, \eqref{term2}, \eqref{term3}, and the continuity of $A$, we get
\[
\ee\avg{X, \nabla f(X)} = \ee \tr(A(X) \hess f(X)).
\] 
This completes the proof for $f\in C^2(\rr^n)$ with $\nabla f$ and $\hess f$ bounded and uniformly continuous. Next, take any $f\in C^2(\rr^n)$. Let $g:\rr^n \ra [0,1]$ be a $C^\infty$ function such that $g(x) = 1$ if $\|x\|\le 1$ and $g(x)=0$ if $\|x\|\ge 2$. For each $a >1$, let $f_a(x) = f(x) g(a^{-1} x)$. Then $f_a\in C^2$ with $\nabla f_a$ and $\hess f_a$ bounded and uniformly continuous. Moreover, $f_a$ and its derivatives converge pointwise to those of $f$ as $a\ra \infty$, as is seen from the expressions
\begin{align*}
\fpar{f_a}{x_i} &= \fpar{f}{x_i}(x) g(a^{-1}x) + a^{-1} f(x) \fpar{g}{x_i}(a^{-1}x), \\
\mpar{f_a}{x_i}{x_j} &= \mpar{f}{x_i}{x_j}(x)g(a^{-1}x) + a^{-1}\fpar{f}{x_i}(x)\fpar{g}{x_j}(a^{-1}x) \\
&\qquad + a^{-1}\fpar{f}{x_j}(x)\fpar{g}{x_i}(a^{-1}x) + a^{-2}f(x) \mpar{g}{x_i}{x_j}(a^{-1}x).
\end{align*}
Since $\ee\|X\|^2< \infty$ and $\|A(x)\|\le b$, the above  expressions also show that if the expectations $\ee|f(X)|^2$, $\ee\|\nabla f(X)\|^2$, and $\ee|\tr(A(X) \hess f(X))|$ are finite, then we can apply the dominated convergence theorem to conclude that
\begin{align*}
&\lim_{a\ra \infty} \ee\avg{X,\nabla f_a(X)} = \ee\avg{X, \nabla f(X)} \ \text{and} \\
&\lim_{a\ra \infty}\ee\tr(A(X) \hess f_a(X)) = \ee\tr(A(X) \hess f(X)).
\end{align*}
This completes the proof.
\end{proof}

\begin{lmm}\label{devbd}
Let $A$ and $X$ be as in Lemma \ref{schauder}. Take any $1\le i< j\le n$. Let 
\[
v_{ij}(x) := a_{ii}(x)+a_{jj}(x)-2a_{ij}(x),
\]
where $a_{ij}$ denotes the $(i,j)$th element of $A$.
Then 
for all $\theta\in \rr$,
\[
\ee\exp(\theta|X_i-X_j|) \le 2\ee\exp(2\theta^2v_{ij}(X)).
\]
\end{lmm}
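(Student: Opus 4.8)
The plan is to extract the one-dimensional inequality from the general machinery of Lemma \ref{schauder} by testing \eqref{iparts2} against functions of the single variable $x_i - x_j$. First I would fix $1 \le i < j \le n$ and, for a positive integer $k$, apply \eqref{iparts2} to the function $f(x) = \frac{1}{2k}(x_i - x_j)^{2k}$. A direct computation gives $\nabla f(x) = (x_i - x_j)^{2k-1}(e_i - e_j)$, so $\avg{x, \nabla f(x)} = (x_i - x_j)^{2k}$, and $\hess f(x) = (2k-1)(x_i - x_j)^{2k-2}(e_i - e_j)(e_i - e_j)^T$, so $\tr(A(x)\hess f(x)) = (2k-1)(x_i - x_j)^{2k-2} v_{ij}(x)$. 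Before invoking \eqref{iparts2} I must check the integrability hypotheses $\ee|f(X)|^2$, $\ee\|\nabla f(X)\|^2$, $\ee|\tr(A(X)\hess f(X))|$; these all follow from the exponential moment bound \eqref{expbd} (which makes every polynomial moment of $\|X\|$ finite) together with $\|A(x)\| \le b$. Thus
\[
\ee(X_i - X_j)^{2k} = (2k-1)\,\ee\bigl((X_i - X_j)^{2k-2} v_{ij}(X)\bigr).
\]

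Next I would apply Hölder's inequality to the right-hand side with exponents $\frac{k}{k-1}$ and $k$, obtaining
\[
\ee(X_i - X_j)^{2k} \le (2k-1)\bigl(\ee(X_i - X_j)^{2k}\bigr)^{(k-1)/k}\bigl(\ee v_{ij}(X)^k\bigr)^{1/k},
\]
and, assuming for the moment that $\ee(X_i-X_j)^{2k}$ is finite and nonzero, dividing through yields $\ee(X_i - X_j)^{2k} \le (2k-1)^k\,\ee v_{ij}(X)^k$. A crude bound $(2k-1)^k \le (2k)^k = 2^k k^k$ together with $k! \ge (k/e)^k$, i.e. $k^k \le e^k k!$, gives $(2k-1)^k \le (2e)^k k!$, hence
\[
\frac{\ee(X_i - X_j)^{2k}}{(2k)!} \le \frac{(2e)^k k!}{(2k)!}\,\ee v_{ij}(X)^k \le \frac{(2e)^k}{k!}\,\ee v_{ij}(X)^k,
\]
using $(2k)!\ge (k!)^2$. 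Summing over $k \ge 0$ (the $k=0$ term being handled trivially) and using the Taylor expansion of $\cosh$ on the left, $\ee\cosh(\theta(X_i-X_j)) = \sum_k \theta^{2k}\ee(X_i-X_j)^{2k}/(2k)!$, I get
\[
\ee\cosh(\theta(X_i - X_j)) \le \sum_{k\ge 0} \frac{(2e\theta^2)^k}{k!}\ee v_{ij}(X)^k = \ee\exp(2e\theta^2 v_{ij}(X)),
\]
by Tonelli's theorem (all terms nonnegative, since $v_{ij}(X) = \avg{e_i-e_j, A(X)(e_i-e_j)} \ge 0$ by positive semidefiniteness of $A$). Finally $\exp(\theta|X_i-X_j|) \le 2\cosh(\theta(X_i-X_j))$ gives the claimed bound — with constant $2e$ rather than $2$ in the exponent, so I would tighten the Hölder/factorial estimates (e.g. keep $(2k-1)^k$ and compare directly with $(2k)!/k! = (k+1)(k+2)\cdots(2k) \ge$ a suitable product) to recover the stated constant $2$; the paper's proof sketch of Theorem \ref{coupling} already indicates the bound $\ee(X-Y)^{2k}\le(2k-1)^k\ee(\cdots)^{2k}$ is the intended intermediate estimate, so this is a matter of bookkeeping.

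The main obstacle is the justification needed to rearrange the chain of inequalities: a priori I only know $\ee(X_i-X_j)^{2k} < \infty$ (from \eqref{expbd}), not that it is positive, and dividing by it requires care in the degenerate case $X_i \equiv X_j$ — though there the conclusion is immediate. More seriously, to pass from the per-$k$ bound to the exponential bound by summation I need the series $\sum_k \theta^{2k}(2k-1)^k\,\ee v_{ij}(X)^k/(2k)!$ to converge, which it does because $v_{ij}(X) \le \|A(X)\| \cdot \|e_i - e_j\|^2 \le 2b$ is bounded, making $\ee v_{ij}(X)^k \le (2b)^k$ and the series dominated by $\sum_k (2b\theta^2 \cdot 2e)^k/k! < \infty$. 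So every step is legitimate; the work is entirely in choosing the test function, verifying the integrability hypotheses of Lemma \ref{schauder}, and managing the combinatorial constants so the final exponent is exactly $2\theta^2$.
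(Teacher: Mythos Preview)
Your approach is essentially identical to the paper's: same test function $f(x)=(x_i-x_j)^{2k}$ (up to the harmless factor $1/(2k)$), same H\"older step to reach $\ee(X_i-X_j)^{2k}\le(2k-1)^k\,\ee v_{ij}(X)^k$, and the same passage to $\cosh$ via the Taylor series. The only place you diverge is the factorial bookkeeping: the paper gets the constant $2$ in the exponent by the single clean inequality $\dfrac{(2k-1)^k}{(2k)!}\le\dfrac{2^k}{k!}$ (equivalently $(2k-1)^k\le\prod_{j=1}^k(2k+2j)$, which is immediate term by term), so you can replace your $(2e)^k$ detour with this and recover the stated bound exactly.
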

\begin{proof}
Take any positive integer $k$. Define $f:\rr^n \ra \rr$ as 
\[
f(x) := (x_i - x_j)^{2k}.
\]
Then a simple calculation shows that
\[
\avg{x, \nabla f(x)} = 2k(x_i - x_j)^{2k},
\]
and
\[
\tr(A(x) \hess f(x)) = 2k(2k-1) (x_i - x_j)^{2k-2} v_{ij}(x).
\]
The positive definiteness of $A$ shows that $v_{ij}$ is everywhere nonnegative. An application of H\"older's inequality now gives
\begin{align*}
\ee|\tr(A(X)\hess f(X))| \le 2k(2k-1) \bigl(\ee(X_i-X_j)^{2k}\bigr)^{\frac{k-1}{k}} \bigl(\ee v_{ij}(X)^k\bigr)^{\frac{1}{k}}.
\end{align*}
From the identity \eqref{iparts2} we can now conclude that
\[
\ee(X_i-X_j)^{2k} \le (2k-1) \bigl(\ee(X_i-X_j)^{2k}\bigr)^{\frac{k-1}{k}} \bigl(\ee v_{ij}(X)^k\bigr)^{\frac{1}{k}}.
\]
This shows that
\[
\ee(X_i - X_j)^{2k} \le (2k-1)^k \ee v_{ij}(X)^k.
\]
To complete the proof, note that
\begin{align*}
\ee\exp(\theta|X_i-X_j|) &\le 2\ee\cosh(\theta(X_i-X_j))\\
&= 2\sum_{k=0}^\infty \frac{\theta^{2k}\ee(X_i-X_j)^{2k}}{(2k)!}\\
&\le 2 + 2\sum_{k=1}^\infty \frac{(2k-1)^k \theta^{2k}\ee(v_{ij}(X)^k)}{(2k)!}.
\end{align*}
By the slightly crude but easy inequality
\[
\frac{(2k-1)^k}{(2k)!} \le \frac{2^k}{k!},
\]
the proof is done.
\end{proof}

\begin{lmm}\label{ipartslemma}
Suppose $\rho$ is a probability density function on $\rr$ which is positive on an interval (bounded or unbounded) and zero outside. Suppose $\int_{-\infty}^\infty x\rho(x)dx = 0$. For each $x$ in the support of $\rho$, let
\[
h(x) := \frac{\int_x^\infty y\rho(y) dy}{\rho(x)}.
\]
Outside the support, let $h \equiv 0$. 
Let $X$ be a random variable with density $\rho$ and finite second moment. Then 
\begin{equation}\label{iparts}
\ee(X\varphi(X)) = \ee(h(X)\vp(X))
\end{equation}
for each absolutely continuous $\varphi$ such that both sides are well defined and $\ee|h(X)\varphi(X)|< \infty$. Moreover, if $h_1$ is another function satisfying \eqref{iparts} for all Lipschitz $\varphi$,  then $h_1 = h$ a.e.\ on the support of $\rho$. 

Conversely, if $Y$ is a random variable such that \eqref{iparts} holds with $Y$ in place of $X$, for all $\varphi$ such that $|\varphi(x)|$, $|x\varphi(x)|$, and $|h(x)\vp(x)|$ are uniformly bounded, then $Y$ must have the density~$\rho$. 
\end{lmm}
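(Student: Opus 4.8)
The plan is to prove the integration-by-parts identity \eqref{iparts}, then the uniqueness of $h$, then the converse. Throughout write $(a,b)$, with $-\infty\le a<0<b\le\infty$, for the support of $\rho$ --- that $0$ lies strictly inside is forced by $\ee(X)=0$ --- and set $F(x):=\int_{-\infty}^x\rho$, $\bar F:=1-F$, and $H(x):=\int_x^\infty y\rho(y)\,dy$. The facts that do all the work are $h\rho=H$ on the support, $H'=-x\rho$, and $H(a^+)=H(b^-)=0$, the vanishing at $a^+$ being exactly the hypothesis $\ee(X)=0$; a one-line Fubini estimate also gives $\ee|h(X)|=\int_a^b|H|\le 2\,\ee(X^2)<\infty$. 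For \eqref{iparts}, rewrite its right-hand side as $\int_a^b\varphi'(x)H(x)\,dx$ and integrate by parts on $(a,b)$. The product $H\varphi$ is locally absolutely continuous with $(H\varphi)'=H\varphi'-x\rho\,\varphi$, and both summands lie in $L^1(a,b)$ because $\ee|h(X)\varphi'(X)|<\infty$ and $\ee|X\varphi(X)|<\infty$ (which is what ``both sides well defined'' supplies), so $H\varphi$ is absolutely continuous on all of $(a,b)$ and has finite one-sided limits at the endpoints. These limits vanish: at a finite endpoint because $H\to 0$ while $\varphi$ stays bounded, and at an infinite endpoint because $H\varphi\in L^1$ near infinity (this being the extra hypothesis $\ee|h(X)\varphi(X)|<\infty$) while a function with a nonzero limit at infinity is not integrable. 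Hence the boundary terms drop out and $\int\varphi'H=\int\varphi\,x\rho=\ee(X\varphi(X))$. Uniqueness of $h$ is then immediate: if $h_1$ also satisfies \eqref{iparts} for every Lipschitz $\varphi$, apply it with $\varphi$ the bounded piecewise-linear antiderivative of $\mathbf 1_{(c,d)}$, $(c,d)\subset(a,b)$ arbitrary (which is legitimate since $\ee|h(X)\varphi(X)|\le (d-c)\,\ee|h(X)|<\infty$); subtracting the two instances gives $\int_c^d(h-h_1)\rho=0$, and since $\rho>0$ on the support, $h=h_1$ a.e.\ there.

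For the converse, let $\nu$ be the law of $Y$; it suffices to show $\ee g(Y)=\int g\rho$ for every $g\in C_c^\infty(a,b)$. Fix such a $g$, put $c_g:=\int g\rho$, and define the Stein solution $\varphi_g(x):=H(x)^{-1}\int_{-\infty}^x(g(t)-c_g)\rho(t)\,dt$ for $x\in(a,b)$ and $\varphi_g(x):=c_g/x$ for $x\notin(a,b)$. Because $g$ vanishes near $a$ and $b$, $\int(g-c_g)\rho=0$, and $H(a^+)=H(b^-)=0$, a short limiting computation shows that at each finite endpoint the two pieces match continuously, with $a\varphi_g(a^+)=c_g$ and $b\varphi_g(b^-)=c_g$ there. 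A direct computation gives $x\varphi_g(x)-h(x)\varphi_g'(x)=c_g-g(x)$ on $(a,b)$ (equivalently $h\varphi_g'=(g-c_g)+x\varphi_g$), while off $(a,b)$ one has $h=0$ and $g=0$, so $x\varphi_g(x)-h(x)\varphi_g'(x)=x\cdot(c_g/x)=c_g-g(x)$ there as well; thus $x\varphi_g-h\varphi_g'\equiv c_g-g$ on all of $\rr$. Using the crude bounds $H(x)\ge x\bar F(x)$ for $0<x<b$, $H(x)\ge |x|F(x)$ for $a<x<0$, and the positivity of $H$ on compact subintervals of $(a,b)$, one checks that $\varphi_g$, $x\varphi_g$ and $h\varphi_g'=(g-c_g)+x\varphi_g$ are all bounded on $\rr$ and that $\varphi_g$ is absolutely continuous, so $\varphi_g$ is an admissible test function. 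Feeding it into \eqref{iparts} gives $0=\ee(Y\varphi_g(Y)-h(Y)\varphi_g'(Y))=\ee(c_g-g(Y))$, i.e.\ $\ee g(Y)=c_g=\int g\rho$. Hence $\nu$ and $\rho\,dx$ agree on $C_c((a,b))$; taking $g\uparrow\mathbf 1_{(a,b)}$ forces $\nu((a,b))=1$, so $\nu$ is concentrated on $(a,b)$ and coincides there with $\rho\,dx$ --- that is, $Y$ has density $\rho$.

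The genuinely routine parts are the integration by parts, the Fubini estimate, the piecewise-linear test function in the uniqueness step, and the elementary computations for $\varphi_g$ and its products. The step I expect to be the main obstacle is the endpoint behaviour of the Stein solution $\varphi_g$: showing that $\varphi_g$, and hence $x\varphi_g$ and $h\varphi_g'$, really stays bounded as $x$ approaches and crosses the endpoints of $\operatorname{supp}\rho$, and --- more delicately --- that $\varphi_g'$ is integrable near those endpoints, so that $\varphi_g$ is genuinely absolutely continuous and therefore admissible. The boundedness is already forced by the crude inequalities $H(x)\ge x\bar F(x)$ and $H(x)\ge |x|F(x)$ together with $a<0<b$; the absolute continuity is where some mild regularity of $\rho$ near the endpoints enters --- it is automatic when $\rho$ is bounded away from $0$ and $\infty$ there, as in Theorem \ref{auto} --- and in the general case one would instead apply \eqref{iparts} to $\varphi_g$ cut off at $a+\ve$ and $b-\ve$ and let $\ve\to 0$, reinstating a modest amount of bookkeeping about how $\nu$ behaves near the endpoints.
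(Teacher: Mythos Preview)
Your proof is correct and follows essentially the same strategy as the paper: integration by parts for \eqref{iparts}, a simple test-function argument for uniqueness, and the Stein-equation solution $\varphi$ for the converse. The differences are purely in execution.

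For \eqref{iparts}, the paper first proves the identity for bounded Lipschitz $\varphi$ (where the boundary terms vanish trivially) and then passes to general $\varphi$ by a smooth cutoff $\varphi_a(x)=\varphi(x)g(a^{-1}x)$ and dominated convergence; you instead integrate by parts directly and argue the boundary terms away using $H\varphi\in L^1$ at infinite endpoints. Both work; the paper's route avoids the ``nonzero limit cannot be integrable'' step. For uniqueness, the paper takes $\varphi$ with $\varphi'=\mathrm{sign}(h_1-h)$ and gets $\ee|h_1(X)-h(X)|=0$ in one line; your indicator test functions achieve the same thing.

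For the converse, the paper's Stein solution is defined for an arbitrary bounded continuous $v$ (not just $g\in C_c^\infty(a,b)$), which immediately yields $\ee v(Y)=\ee v(X)$ for all such $v$ and hence equality in law, with no separate argument needed to show $Y$ is supported on $(a,b)$. The paper only defines $\varphi$ on the support and does not explicitly extend it, nor does it discuss endpoint regularity of $\varphi'$; your concern about absolute continuity at finite endpoints is legitimate and is simply not addressed in the paper. In the paper's actual application (proof of Theorem~\ref{coupling}) the density $\rho$ is assumed positive and continuous on all of $\rr$, so the support has no finite endpoints and the issue is moot. Your explicit extension $\varphi_g(x)=c_g/x$ off the support, together with the cutoff-at-$a+\ve$, $b-\ve$ fallback you sketch, is a cleaner way to handle the general case than what the paper writes down.
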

\begin{proof}
Let $u(x) = h(x) \rho(x)$. Note that $u$ is continuous, positive on the support of $\rho$, and $\lim_{x\ra -\infty} u(x) = \lim_{x\ra \infty} u(x) = 0$ since
\[
u(x) = \int_x^\infty y \rho(y) dy = - \int_{-\infty}^x y\rho(y) dy.
\]
Note that the above identity holds because $\int_{-\infty}^\infty x\rho(x) dx = 0$. Again, by the assumption that $\ee(X^2) < \infty$, it is easy to verify that
\[
\ee( h(X)) = \int_{-\infty}^\infty u(x) dx = \ee(X^2) < \infty.
\]
When $\varphi$ is a bounded Lipschitz function, then \eqref{iparts} is just the integration by parts identity
\[
\int_{-\infty}^\infty x\varphi(x) \rho(x) dx = \int_{-\infty}^\infty \vp(x) u(x)dx.
\]
Now take any absolutely continuous $\varphi$ and a $C^\infty$ map $g:\rr \ra [0,1]$ such that $g(x) = 1$ on $[-1,1]$ and $g(x)=0$ outside $[-2,2]$. For each $a >1$, let
\[
\varphi_a(x) := \varphi(x) g(a^{-1}x).
\]
Then
\[
\vp_a(x) = \vp(x) g(a^{-1}x) + a^{-1}\varphi(x) g'(a^{-1}x).
\]
It is easy to see that $\varphi_a$ and $\vp_a$ are bounded, and they converge to $\varphi$ and $\vp$ pointwise as $a\ra \infty$. Moreover, $|x\varphi_a(x)|\le |x\varphi(x)|$ and 
\[
|h(x)\vp_a(x)| \le |h(x)\vp(x)| + a^{-1}\|g'\|_\infty|h(x)\varphi(x)|.
\] 
Since we have assumed that $\ee|X\varphi(X)|$, $\ee|h(X)\vp(X)|$, and $\ee|h(X)\varphi(X)|$ are finite, we can now apply the dominated convergence theorem to conclude that \eqref{iparts} holds for $\varphi$. 

Suppose $h_1$ is another function satisfying \eqref{iparts} for all Lipschitz $\varphi$ and $\ee(X^2)<\infty$. Let $\varphi(x)$ be a Lipschitz function such that $\vp(x) = \mathrm{sign}(h_1(x)-h(x))$. Then 
\[
0 = \ee(\vp(X)(h_1(X)-h(X))) = \ee|h_1(X)-h(X)|.
\]
This shows that $h_1 = h$ a.e.\ on the support of $\rho$.

For the converse, let $X$ have density $\rho$ and take any bounded continuous function $v:\rr\ra \rr$, let $m = \ee v(X)$, and define
\[
\varphi(x) := \frac{1}{u(x)}\int_{-\infty}^x \rho(y)(v(y) - m) dy = - \frac{1}{u(x)}\int_x^\infty \rho(y)(v(y) - m) dy
\]
on the support of $\rho$. 
Since $u$ is nonzero and absolutely continuous everywhere on the support of $\rho$, therefore $\varphi$  is well-defined and absolutely continuous. Next, we prove that $|x\varphi(x)|$ is uniformly bounded. If $x \ge 0$, then
\begin{align*}
|x\varphi(x)| &= \biggl|\frac{x}{u(x)}\int_x^\infty \rho(y)(v(y) - m) dy\biggr|\\
&\le \frac{2\|v\|_\infty}{|u(x)|}\int_x^\infty y \rho(y) dy = 2\|v\|_\infty.
\end{align*}
Similarly, the same bound holds for $x<0$. A direct verification shows that
\[
h(x) \vp(x) - x\varphi(x) = v(x) - m.
\]
Thus, $|h(x)\vp(x)|$ is uniformly bounded. Finally, by the continuity of $\varphi$, 
$|\varphi(x)| \le \sup_{|t|\le 1} |\varphi(t)| + |x\varphi(x)|$ is also uniformly bounded.

So, if $Y$ is a random variable such that \eqref{iparts} holds for $Y$ in place of $X$ and every $\varphi$ such that $|\varphi(x)|$, $|x\varphi(x)|$,  and $|h(x)\vp(x)|$ are uniformly bounded, then
\[
\ee v(Y) - \ee v(X) = \ee(v(Y) - m) = \ee(h(Y)\vp(Y)-Y\varphi(Y)) = 0.
\]
Thus, $Y$ must have the same distribution as $X$.
\end{proof}

\begin{proof}[Proof of Theorem \ref{coupling}]
First, assume $W$ has a density $\rho$ with respect to Lebesgue measure which is positive and continuous everywhere. Define $h$ in terms of $\rho$ as in the statement of Lemma \ref{ipartslemma}. Then by the second assertion of Lemma \ref{ipartslemma}, 
\[
h(w) = \ee(T | W=w) \ \text{a.s.}
\]
Note that $h$ is nonnegative by definition. So we can define a function $A$ from $\rr^2$ into the set of $2\times 2$ positive semidefinite matrices as 
\[
A(x_1,x_2) := \left(
\begin{array}{cc}
h(x_1) & \sigma\sqrt{h(x_1)}\\
\sigma\sqrt{h(x_1)} & \sigma^2
\end{array}
\right).
\]
Note that $A(x_1,x_2)$ does not depend on $x_2$ at all. It is easy to see that $A$ is positive semidefinite. Also, since $\rho$ is assumed to be continuous, therefore so are $h$ and $A$. Since $T$ is bounded by a constant, so is $h$. Let $X = (X_1, X_2)$ be a random vector satisfying \eqref{expbd} and~\eqref{iparts2} of Lemma~\ref{schauder} with this $A$. Take any absolutely continuous $\varphi:\rr \ra \rr$ such that $|\varphi(x)|$, $|x\varphi(x)|$, and $|h(x)\vp(x)|$ are uniformly bounded. Let $\Phi$ denote an antiderivative of $\varphi$, i.e.\ a function such  that $\Phi' = \varphi$. We can assume that $\Phi(0) = 0$. Define $f:\rr^2 \ra \rr$ as $f(x_1,x_2) := \Phi(x_1)$. Then for some constant $C$,  for all $x_1,x_2$, 
\begin{align*}
&|f(x_1,x_2)| \le C|x_1|, \ \|\nabla f(x_1,x_2)\| \le C, \\
&\text{ and } \ |\tr(A(x_1,x_2) \hess f (x_1,x_2))| \le C.
\end{align*}
Thus, we can apply Lemma \ref{schauder} to conclude that for this $f$,
\[
\ee\avg{X,\nabla f(X)} = \ee\tr(A(X) \hess f(X)),
\]
which can be written as
\[
\ee (X_1 \varphi(X_1)) = \ee(h(X_1) \vp(X_1)).
\]
Since this holds for all $\varphi$ such that $|\varphi(x)|$, $|x\varphi(x)|$, and $|h(x)\vp(x)|$ are uniformly bounded, Lemma \ref{ipartslemma} tells us that $X_1$ must have the same distribution as $W$. 

Similarly, taking any $\varphi$ such that $|\varphi(x)|$, $|x\varphi(x)|$, and $|\vp(x)|$ are uniformly bounded, letting $\Phi$ be an antiderivative of $\varphi$, and putting $f(x_1,x_2) = \Phi(x_2)$, we see that
\[
\ee(X_2 \varphi(X_2)) = \sigma^2 \ee(\vp(X_2)),
\] 
which implies that $X_2 \sim N(0,\sigma^2)$. We now wish to apply Lemma \ref{devbd} to the pair $(X_1,X_2)$. Note that 
\[
v_{12}(x_1,x_2) = h(x_1) + \sigma^2 - 2\sigma\sqrt{h(x_1)} = \bigl(\sqrt{h(x_1)} - \sigma\bigr)^2
\]
Since $h(x_1) \ge 0$, we have
\[
\bigl(\sqrt{h(x_1)} - \sigma\bigr)^2 = \frac{\bigl(h(x_1) - \sigma^2\bigr)^2}{\bigl(\sqrt{h(x_1)} + \sigma\bigr)^2} \le \frac{\bigl(h(x_1) - \sigma^2\bigr)^2}{\sigma^2}.
\]
Since $h(X_1)$ has the same distribution as $h(W)$, and $h(W) = \ee(T|W)$, the required bound can now be obtained using Lemma \ref{devbd} and Jensen's inequality.

So we have finished the proof when $W$ has a probability density $\rho$ with respect to Lebesgue measure which is positive and continuous everywhere.  Let us now drop that assumption, but keep all others. For each $\ve > 0$, let $W_\ve := W + \ve Y$, where $Y$ is an independent standard gaussian random variable. If $\nu$ denotes the law of $W$ on the real line, then $W_\ve$ has the probability density function
\[
\rho_\ve(x) = \int_{-\infty}^\infty \frac{e^{-(x-y)^2/2\ve^2}}{\sqrt{2\pi}\ve} d\nu(y).
\]
From the above representation, it is easy to deduce that $\rho_\ve$ is positive and continuous everywhere. Again, note that for any Lipschitz $\varphi$, 
\begin{align*}
\ee(W_\ve\varphi(W_\ve)) &= \ee(W \varphi(W+\ve Y)) + \ve \ee(Y\varphi(W + \ve Y))\\
&= \ee(T\vp(W+\ve Y)) + \ve^2 \ee(\vp(W+\ve Y)) \\
&= \ee((T+\ve^2)\vp(W_\ve)).
\end{align*}
(Note that in the second step, we required that \eqref{coeffdef} holds for {\it any} derivative of $\varphi$ instead of just one.) Thus, by what we have already proved, we can construct a version of $W_\ve$ and a $N(0,\sigma^2+ \ve^2)$ r.v.\ $Z_\ve$ on the same probability space such that for all~$\theta$,
\[
\ee\exp(\theta|W_\ve - Z_\ve|) \le 2\ee \exp\biggl(\frac{2\theta^2 (T-\sigma^2)^2}{\sigma^2 + \ve^2}\biggr). 
\]
Let $\mu_\ve$ be the law of the pair $(W_\ve,Z_\ve)$ on $\rr^2$. Clearly, $\{\mu_\ve\}_{\ve > 0}$ is a tight family. Let $\mu_0$ be a cluster point as $\ve \ra 0$, and let $(W_0,Z_0)\sim \mu_0$. Then $W_0$ has the same distribution as $W$, and $Z_0\sim N(0,\sigma^2)$. By the Skorokhod representation, Fatou's lemma, and the monotone convergence theorem, it is clear that
\[
\ee\exp(\theta|W_0 - Z_0|)\le \liminf_{\ve \ra 0} \ee\exp(\theta|W_\ve - Z_\ve|) \le 2\ee \exp\biggl(\frac{2\theta^2 (T-\sigma^2)^2}{\sigma^2}\biggr). 
\]
This completes the proof.
\end{proof}

\section{Elaborations on Example 3}
\label{tusnady}
The goal of this section is to prove the following two theorems. The first one is simply Example 3 from Section \ref{intro}. The second one can be called a conditional version of the same thing (which is harder to prove).

\begin{thm}\label{coupling3}
There exist universal constants $\kappa$ and $\theta_0 > 0$ such that the following is true. Let $n$ be a positive integer and let $\ve_1,\ldots,\ve_n$ be i.i.d.\ symmetric $\pm 1$ random variables. Let $S_n = \sum_{i=1}^n \ve_i$.  It is possible to construct a version of $S_n$ and $Z_n \sim N(0,n)$ 
on the same probability space such that 
\[
\ee \exp( \theta_0 |S_n - Z_n|) \le \kappa.
\]
\end{thm}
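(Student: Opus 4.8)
The plan is to apply Theorem~\ref{coupling} to the smoothed walk $W_n = S_n + Y$ from Example~3, with $Y \sim \mathrm{Uniform}[-1,1]$ independent of the $\ve_i$, and variance parameter $\sigma^2 = n$. First I would verify the claim asserted in Example~3 that
\[
T_n = n - S_n Y + \frac{1-Y^2}{2}
\]
is a Stein coefficient for $W_n$, i.e.\ that $\ee(W_n \varphi(W_n)) = \ee(\vp(W_n) T_n)$ for every Lipschitz $\varphi$. This is the computational heart of the matter: expand $\ee((S_n + Y)\varphi(S_n+Y))$, handle the $S_n$ part by a discrete summation-by-parts / symmetry argument over the $\pm1$ variables (flipping the sign of $\ve_i$ and pairing terms, which produces the $-S_n Y$ contribution after also differencing $\varphi$ across the unit gap via the uniform smoothing), and handle the $\ee(Y\varphi(S_n+Y))$ part by ordinary integration by parts in $Y$ on $[-1,1]$, which produces $\ee(\vp(W_n)\cdot\frac{1-Y^2}{2})$ together with boundary terms that telescope against the discrete part to give the constant $n$. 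One must be a little careful that $\varphi$ is only Lipschitz, so $\vp$ is defined a.e.; since $Y$ has a density this causes no problem.

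Next, check that $|T_n|$ is bounded by a constant \emph{for fixed $n$}: indeed $|T_n| \le n + n + 1 = 2n+1$ since $|S_n|\le n$ and $|Y|\le 1$. (Boundedness is all Theorem~\ref{coupling} requires; the bound is allowed to depend on $n$.) Then Theorem~\ref{coupling} with $\sigma^2 = n$ produces $Z_n' \sim N(0,n)$ on the same probability space with
\[
\ee\exp(\theta|W_n - Z_n'|) \le 2\,\ee\exp\!\biggl(\frac{2\theta^2 (T_n - n)^2}{n}\biggr)
\]
for every $\theta \in \rr$. Now $T_n - n = -S_n Y + \frac{1-Y^2}{2}$, so $|T_n - n| \le |S_n| + \tfrac12$, hence $(T_n-n)^2 \le 2 S_n^2 + \tfrac12 \le 2n^2 \cdot(\text{nothing uniform})$ — which is useless directly — so instead I would use the subgaussian tail of $S_n$: for i.i.d.\ symmetric $\pm1$ variables, $\ee\exp(\lambda S_n^2/n) \le 2$ for a small enough universal $\lambda > 0$ (Hoeffding's inequality gives $\ee e^{tS_n} \le e^{t^2 n/2}$, whence $\ee e^{\lambda S_n^2/n}$ is bounded uniformly in $n$ for $\lambda$ below a universal threshold, via the Gaussian-integral trick of writing $e^{\lambda S_n^2/n} = \ee_G e^{\sqrt{2\lambda}\,G S_n/\sqrt n}$ with $G$ standard normal). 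Therefore, choosing $\theta_0$ a small enough universal constant so that $4\theta_0^2 \le \lambda$, we get from the displayed bound, using $(T_n - n)^2 \le 2S_n^2 + 1$,
\[
\ee\exp(\theta_0|W_n - Z_n'|) \le 2\,e^{2\theta_0^2/n}\,\ee\exp\!\biggl(\frac{4\theta_0^2 S_n^2}{n}\biggr) \le 2e^{2\theta_0^2}\cdot 2 =: \kappa_0,
\]
a universal constant.

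Finally, I would pass from $W_n$ back to $S_n$. Since $|W_n - S_n| = |Y| \le 1$, the triangle inequality gives $|S_n - Z_n'| \le |W_n - Z_n'| + 1$, so $\ee\exp(\theta_0|S_n - Z_n'|) \le e^{\theta_0}\kappa_0 =: \kappa$, a universal constant; set $Z_n := Z_n'$. This proves the theorem with $\theta_0$ and $\kappa$ as above. The main obstacle is the verification that the ad hoc $T_n$ really is a Stein coefficient for $W_n$ — the discrete summation-by-parts step, where one must carefully match the boundary terms from integrating by parts in $Y$ against the increments of $\varphi$ coming from the $\pm1$ sign flips so that everything collapses to the stated closed form; the rest is routine once the subgaussian moment bound for $S_n$ is invoked.
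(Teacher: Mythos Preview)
Your proposal is correct and follows essentially the same route as the paper: smooth $S_n$ by an independent $Y\sim\mathrm{Uniform}[-1,1]$, verify that $T_n=n-S_nY+\tfrac{1-Y^2}{2}$ is a Stein coefficient for $W_n=S_n+Y$ (the paper does this via a clean one-variable lemma showing $\ee(X\varphi(X+Y))=\ee((1-XY)\vp(X+Y))$ and $\ee(Y\varphi(X+Y))=\tfrac12\ee((1-Y^2)\vp(X+Y))$, then conditioning and symmetry --- exactly your ``flip a sign / integrate by parts in $Y$'' plan), apply Theorem~\ref{coupling} with $\sigma^2=n$, bound $(T_n-n)^2\le 2S_n^2+\tfrac12$, and control $\ee\exp(c\,S_n^2/n)$ uniformly in $n$ by the Gaussian-integral trick together with $\cosh t\le e^{t^2}$. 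One small point you leave implicit: after Theorem~\ref{coupling} you only have a coupling of $W_n$ with $Z_n$, so to get $S_n$ on the same space you use that $S_n$ is a.s.\ a measurable function of $W_n$ (possible values of $S_n$ have the parity of $n$, hence are $2$-separated, so $W_n=S_n+Y$ with $|Y|\le1$ determines $S_n$); the paper states this explicitly.
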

Note that by Markov's inequality, this implies exponentially decaying tails for $|S_n - Z_n|$, with a rate of decay that does not depend on $n$. 

\begin{thm}\label{coupling2}
Let $\ve_1,\ldots,\ve_n$ be $n$ arbitrary elements of $\{-1,1\}$. Let $\pi$ be a uniform random permutation of $\{1,\ldots,n\}$. For each $1\le k\le n$, let
$S_k = \sum_{\ell=1}^k \ve_{\pi(\ell)}$, 
and let 
\[
W_k = S_k - \frac{kS_n}{n}.
\]
There exist universal constants $c > 1$ and $\theta_0 >0$ satisfying the following. Take any $n\ge 3 $, any possible value of $S_n$, and any $n/3\le k\le 2n/3$. It is possible to construct a version of $W_k$ and a gaussian random variable $Z_k$ with mean~$0$ and variance $k(n-k)/n$  on the same probability space such that for any $\theta \le  \theta_0$, 
\[
\ee \exp( \theta |W_k - Z_k|) \le \exp\biggl( 1+ \frac{c\theta^2 S_n^2}{n}\biggr).
\]
\end{thm}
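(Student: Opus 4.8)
The quantity $S_n=\sum_i\ve_i$ is a fixed number (a function of the given sequence), so the only randomness is in $\pi$, and the statement is the ``fixed number of $+1$'s'' counterpart of Example 3 (Theorem~\ref{coupling3}), with a bridge centering and an arbitrary freezing time $k$. Write $\sigma^2:=k(n-k)/n$ and $\lambda:=1/\sigma^2=n/(k(n-k))$; the hypothesis $n/3\le k\le 2n/3$ guarantees $\sigma^2\asymp n$ and $|n-2k|\le n/3$, uniformly. The plan is to smooth $W_k$, produce a Stein coefficient, and invoke Theorem~\ref{coupling}. For the smoothing step, let $Y\sim\mathrm{Uniform}[-1,1]$ be independent of $\pi$ and put $W_k':=W_k+Y$. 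Since $S_k$ lives on a lattice of spacing $2$, $W_k'$ has a density that is piecewise constant and strictly positive on an interval, with no interior zeros, and $|W_k-W_k'|\le 1$; also $\ee(W_k')=0$. It therefore suffices to (a)~exhibit a Stein coefficient $T_k$ for $W_k'$, in the sense of \eqref{coeffdef}, that is almost surely bounded, and (b)~prove that for a small universal $\theta_0$ and universal $\kappa_0,c'$,
\[
\ee\exp\bigl(2\theta^2(T_k-\sigma^2)^2/\sigma^2\bigr)\le\kappa_0\exp\bigl(c'\theta^2S_n^2/n\bigr)\quad\text{for }0<\theta\le\theta_0.
\]
Given these, Theorem~\ref{coupling} with variance parameter $\sigma^2$ yields $Z_k\sim N(0,\sigma^2)$ coupled to $W_k'$, so $\ee\exp(\theta|W_k-Z_k|)\le e^{\theta_0}\ee\exp(\theta|W_k'-Z_k|)\le 2e^{\theta_0}\kappa_0\exp(c'\theta^2S_n^2/n)$, which is at most $\exp(1+c\theta^2S_n^2/n)$ once $\theta_0$ is small and $c\ge c'$. (The auxiliary $Y$ can be removed exactly as in the last paragraph of the proof of Theorem~\ref{coupling}: pass to a weak cluster point of the laws of $(W_k',Z_k)$ and apply Fatou. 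The ``$1+$'' in the statement exists precisely to absorb the offset $e^{\theta_0}$ and the constant $2\kappa_0$.)

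The heart of the proof is step~(a), and --- as with the $T_n$ of Example 3 --- the construction is somewhat ad hoc. Split $\ee(W_k'\varphi(W_k'))=\ee\bigl((S_k-kS_n/n)\varphi(W_k')\bigr)+\ee\bigl(Y\varphi(W_k')\bigr)$, the second term equalling $\ee\bigl((1-Y^2)\vp(W_k')/2\bigr)$ by the same one-line integration by parts used for $T_n$. For the first term, the without-replacement analogue of ``resampling a summand'' is the exchangeable pair $(\pi,\pi')$ obtained by transposing $\pi(I)$ and $\pi(J)$ with $I,J$ independent and uniform on $\{1,\dots,k\}$ and $\{k+1,\dots,n\}$. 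Writing $\Delta:=S_k(\pi')-S_k\in\{-2,0,2\}$, one has the linearity relation $\ee(\Delta\mid\pi)=-\lambda W_k$, and since $W_k'(\pi')=W_k'+\Delta$, exchangeability ($\ee[\Delta(\varphi(W_k'+\Delta)+\varphi(W_k'))]=0$) combined with $\varphi(W_k'+\Delta)-\varphi(W_k')=\int_0^1\vp(W_k'+t\Delta)\Delta\,dt$ gives $\ee(W_k\varphi(W_k'))=\tfrac1{2\lambda}\ee\bigl(\Delta^2\int_0^1\vp(W_k'+t\Delta)\,dt\bigr)$. The remaining task --- converting this into $\ee\bigl((\text{explicit coefficient})\cdot\vp(W_k')\bigr)$, i.e.\ moving $\vp$ off the intermediate points $W_k'+t\Delta$ --- is where one uses that $\Delta\in 2\mathbb{Z}$ together with the width-$2$ smoothing, by a hands-on change of variables, together with an auxiliary elimination of the residual non-$\vp$ terms that appear because, unlike the i.i.d.\ case, the conditional law of a single swapped coordinate need not be symmetric. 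The upshot is a bounded, $(\pi,Y)$-measurable coefficient of the shape
\[
T_k=\frac{1}{2\lambda}\,\ee(\Delta^2\mid\pi)+\frac{1-Y^2}{2}+(\text{bounded lower-order terms})=\sigma^2-\frac{S_k(S_n-S_k)}{n}+\frac{1-Y^2}{2}+\cdots,
\]
using $\ee(\Delta^2\mid\pi)=2-2S_k(S_n-S_k)/(k(n-k))$. In particular $|T_k|\le\sigma^2+Cn$ is bounded for each fixed $n$, which is all Theorem~\ref{coupling} needs.

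For step~(b), substituting $S_k=W_k+kS_n/n$ in the displayed form gives
\[
T_k-\sigma^2=\frac{W_k^2}{n}-\frac{(n-2k)S_n}{n^2}\,W_k-\frac{\sigma^2S_n^2}{n^2}+O(1),
\]
and one bounds $\ee\exp(2\theta^2(T_k-\sigma^2)^2/\sigma^2)$ term by term (absorbing cross terms via $2ab\le a^2+b^2$). The key input is that the partial sum $W_k$ under sampling without replacement is subgaussian with variance proxy $\asymp\sigma^2\asymp n$ (Hoeffding's comparison inequality \cite{hoeffding63}) and deterministically $|W_k|\le 2k$. Then the $W_k^4/(n^2\sigma^2)$-type contribution is $\le\exp(O(\theta^2))$ by H\"older; the cross term, of scale $\lesssim|S_n|/\sqrt n$ precisely because $|n-2k|\le n/3$, contributes $\exp(O(\theta^2S_n^2/n^2))$; the deterministic overshoot $\sigma^2S_n^2/n^2\asymp S_n^2/n$ contributes $\exp(O(\theta^2S_n^4/n^3))$; the $O(1)$ terms contribute $\exp(O(\theta^2))$. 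Since $S_n^2\le n^2$, every one of these is $\le\exp(O(\theta^2S_n^2/n))$ up to a universal constant, provided $\theta\le\theta_0$ with $\theta_0$ small enough to keep the relevant moment generating functions finite and uniformly bounded; multiplying them yields the inequality of step~(b) and completes the proof.

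The main obstacle is step~(a): making the Stein coefficient \emph{exact}. The exchangeable-pair computation handles the dependence and the uniform smoothing handles the lattice, but fusing them so that $\vp$ is evaluated only at $W_k'$ --- and disposing of the residual terms created by the asymmetry of the conditional law of a swapped coordinate --- has no general recipe; it is the analogue of simply writing down $T_n=n-S_nY+(1-Y^2)/2$ in Example 3 and verifying it works. Step~(b) is routine by comparison, its only subtlety being to keep every constant independent of $n$, of $k\in[n/3,2n/3]$, and of the $\ve_i$'s except through $S_n$.
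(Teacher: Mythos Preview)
Your overall scaffold is right --- smooth by $Y\sim\mathrm{Uniform}[-1,1]$, exhibit a Stein coefficient for $W_k+Y$, apply Theorem~\ref{coupling}, then control the exponential moment --- and that is exactly what the paper does. The gap, as you yourself flag, is step~(a): you do not actually construct the Stein coefficient, and the route you sketch (exchangeable pair with a random transposition, then a ``hands-on change of variables'' in $Y$ to move $\vp(W_k'+t\Delta)$ back to $\vp(W_k')$) is harder than necessary and is precisely where your proposal stops being a proof.

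The paper's device for step~(a) is different and cleaner, and it dissolves the ``asymmetry'' you worry about. Instead of averaging over random $I,J$ and conditioning on $\pi$, fix $i\le k<j$ and condition on $\{\pi(\ell):\ell\ne i,j\}$. This determines the unordered pair $\{\ve_{\pi(i)},\ve_{\pi(j)}\}$; if the two values agree the contribution is zero, while if they differ then, \emph{conditionally}, $\ve_{\pi(i)}$ is exactly a symmetric $\pm1$ variable independent of $Y$, and $W=W^{-}+\ve_{\pi(i)}$ with $W^{-}$ measurable. Now Lemma~\ref{intparts} applies verbatim and gives, with no shifted arguments and no residual boundary terms,
\[
\ee^{-}\bigl[(\ve_{\pi(i)}-\ve_{\pi(j)})\varphi(W+Y)\bigr]=\ee^{-}\bigl[a_{ij}\,\vp(W+Y)\bigr],\qquad a_{ij}=1-\ve_{\pi(i)}\ve_{\pi(j)}-(\ve_{\pi(i)}-\ve_{\pi(j)})Y.
\]
Summing over $i,j$ via $W=\frac1n\sum_{i\le k<j}(\ve_{\pi(i)}-\ve_{\pi(j)})$ and adding the $Y$ contribution yields the explicit
\[
T=\frac{k(n-k)}{n}-\frac{S_k(S_n-S_k)}{n}-WY+\frac{1-Y^2}{2}.
\]
So your ``$\cdots$'' hides the term $-WY$; note this is $O(|W_k|)$, not $O(1)$, so the ``$+\,O(1)$'' in your step~(b) expansion is not correct as written (though it is harmless once you track it: $(W_kY)^2/\sigma^2\lesssim W_k^2/n$, which is controlled by the same subgaussian bound you already use).

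For step~(b), your expansion in $W_k$ works, but the paper takes a shorter path: bound $(T-\sigma^2)^2/\sigma^2\le C(S_k^2/k+S_n^2/n+1)$ directly from the explicit $T$, and then invoke Lemma~\ref{moment2}, which gives $\ee\exp(\alpha S_k^2/k)\le\exp(1+\tfrac{3}{4}\alpha S_n^2/n)$ for small $\alpha$. That lemma is proved once (from the subgaussian estimate of Lemma~\ref{momentbd}) and is reused in the induction of Section~\ref{induction}, so isolating it is natural.
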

Both of the above theorems will be proved using Theorem \ref{coupling}. 
We proceed as before in a sequence of lemmas that are otherwise irrelevant for the rest of the manuscript (except Lemma \ref{moment2}, which has an important application later on).

\begin{lmm}\label{intparts}
Suppose $X$ and $Y$ are two independent random variables, with $X$ following the symmetric distribution on $\{-1,1\}$ and $Y$ following  the uniform distribution on $[-1,1]$. Then for any Lipschitz $\varphi$, we have
\[
\ee(X\varphi(X + Y)) = \ee( (1 - XY)\vp(X+Y)),
\]
and
\[
\ee(Y\varphi(X+Y)) = \frac{1}{2}\ee((1-Y^2)\vp(X+Y)).
\]
\end{lmm}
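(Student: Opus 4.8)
The plan is to prove both identities by direct integration by parts, treating the two sides carefully because $X+Y$ takes values in $[-2,0]\cup[0,2]$ and the two "pieces" of the distribution (coming from $X=1$ and $X=-1$) overlap. First I would write out the left-hand side of the first identity by conditioning on $X$: since $X$ is symmetric $\pm1$ and independent of $Y\sim\mathrm{Uniform}[-1,1]$,
\[
\ee(X\varphi(X+Y)) = \tfrac12\Bigl(\tfrac12\int_{-1}^1 \varphi(1+y)\,dy - \tfrac12\int_{-1}^1 \varphi(-1+y)\,dy\Bigr)
= \tfrac14\int_0^2\varphi(u)\,du - \tfrac14\int_{-2}^0\varphi(u)\,du.
\]
So the left side is $\tfrac14\int_{-2}^{2}\mathrm{sign}(u)\,\varphi(u)\,du$ (with $\mathrm{sign}(0)$ irrelevant). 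The idea is to recognize $\tfrac14\,\mathrm{sign}(u)$ as (up to sign and endpoint terms) the derivative of an explicit density-like weight and push the derivative onto $\varphi$.

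Next I would compute the right-hand side the same way. Writing $g(u) := \ee((1-XY)\mathbf{1}\{X+Y\in du\})$ as an explicit function on $[-2,2]$: for $u\in(0,2)$ only $X=1$ contributes (with $Y=u-1$), giving weight $\tfrac14(1-(u-1)) = \tfrac14(2-u)$; for $u\in(-2,0)$ only $X=-1$ contributes (with $Y=u+1$), giving weight $\tfrac14(1+(u+1)) = \tfrac14(2+u)$. Thus $\ee((1-XY)\vp(X+Y)) = \tfrac14\int_0^2 (2-u)\vp(u)\,du + \tfrac14\int_{-2}^0(2+u)\vp(u)\,du$. Now I integrate by parts on each interval: $\int_0^2(2-u)\vp(u)\,du = [(2-u)\varphi(u)]_0^2 + \int_0^2\varphi(u)\,du = -2\varphi(0) + \int_0^2\varphi(u)\,du$, and similarly $\int_{-2}^0(2+u)\vp(u)\,du = [(2+u)\varphi(u)]_{-2}^0 - \int_{-2}^0\varphi(u)\,du = 2\varphi(0) - \int_{-2}^0\varphi(u)\,du$. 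Adding, the boundary terms at $\pm2$ vanish because the weights $2\mp u$ vanish there, and the two $\varphi(0)$ terms cancel, leaving exactly $\tfrac14\int_0^2\varphi(u)\,du - \tfrac14\int_{-2}^0\varphi(u)\,du$, which matches the left-hand side computed above. The second identity is handled identically: $\ee(Y\varphi(X+Y)) = \tfrac14\int_0^2(u-1)\varphi(u)\,du + \tfrac14\int_{-2}^0(u+1)\varphi(u)\,du$, while $\tfrac12\ee((1-Y^2)\vp(X+Y)) = \tfrac18\int_0^2(1-(u-1)^2)\vp(u)\,du + \tfrac18\int_{-2}^0(1-(u+1)^2)\vp(u)\,du$; integrating by parts on each piece (the weights $1-(u\mp1)^2$ vanish at both endpoints $0$ and $\pm2$, so there are no boundary terms at all) and using $\tfrac{d}{du}\bigl[-\tfrac18(1-(u-1)^2)\bigr] = \tfrac14(u-1)$ reproduces the left-hand side.

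The only genuine subtlety — and the step I would be most careful about — is the justification of integration by parts for a merely Lipschitz (hence only a.e.-differentiable, absolutely continuous) $\varphi$, and the bookkeeping of the endpoint contributions at $u=0$ where the two branches meet. Since $\varphi$ is Lipschitz it is absolutely continuous on each of $[0,2]$ and $[-2,0]$, so the fundamental theorem of calculus applies on each closed interval and the integration by parts is valid with $\vp$ any a.e.-derivative; the point worth stating explicitly is that the interior boundary terms at $0$ coming from the two intervals have opposite signs and cancel (in the first identity) or are individually zero (in the second). Everything else is a routine finite computation, and no convergence issues arise because all integrals are over the bounded set $[-2,2]$ and $\varphi,\vp$ are bounded there.
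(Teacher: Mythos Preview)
Your proof is correct and follows essentially the same route as the paper: condition on $X$, write out the integrals, and integrate by parts so that the boundary contributions at the interior point cancel. The only cosmetic difference is that you change variables to $u=X+Y$ and work on $[-2,0]\cup[0,2]$, while the paper keeps the integration in $y$ over $[-1,1]$; for the second identity the paper observes directly that $\int_{-1}^1 y\,\varphi(x+y)\,dy=\int_{-1}^1\frac{1-y^2}{2}\,\vp(x+y)\,dy$ for each fixed $x$, which avoids the split into two pieces, but this is a minor simplification rather than a different idea.
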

\begin{proof}
We have
\begin{align*}
\ee((1-XY)\vp(X+Y)) &= \frac{1}{4}\int_{-1}^1(1+y)\vp(-1+y) dy \\
&\qquad + \frac{1}{4}\int_{-1}^1 (1-y) \vp(1+y) dy.
\end{align*}
Integrating by parts, we see that
\begin{align*}
\int_{-1}^1(1+y)\vp(-1+y) dy &= 2\varphi(0) - \int_{-1}^1 \varphi(-1+y) dy,
\end{align*}
and
\begin{align*}
\int_{-1}^1(1-y)\vp(1+y) dy &= -2\varphi(0) + \int_{-1}^1 \varphi(1+y) dy.
\end{align*}
Adding up, we get
\begin{align*}
\ee((1-XY) \vp(X+Y)) &= \frac{1}{4} \int_{-1}^1 \varphi(1+y) dy - \frac{1}{4}\int_{-1}^1 \varphi(-1+y) dy\\
&= \ee(X\varphi(X+Y)).
\end{align*}
For the second part, just observe that for any $x$, integration by parts gives
\[
\frac{1}{2}\int_{-1}^1 y \varphi(x+y) dy = \frac{1}{2}\int_{-1}^1 \frac{1-y^2}{2}\vp(x+y) dy.
\]
This completes the proof.
\end{proof}

\begin{proof}[Proof of Theorem \ref{coupling3}]
For simplicity, let us write $S$ for $S_n$.
Let $Y$ be a random variable independent of $\ve_1,\ldots,\ve_n$ and uniformly distributed on the interval $[-1,1]$. Suppose we are given the values of $\ve_1, \ldots,\ve_{n-1}$. Let $\ee^-$ denote the conditional expectation given this information. Let 
\[
S^- = \sum_{i=1}^{n-1}\ve_i, \ \ X = \ve_n.
\]
Then Lemma \ref{intparts} gives
\begin{align*}
\ee^-(X\varphi(S^- + X + Y)) &= \ee^- ((1-XY)\vp(S + Y))\\
&= \ee^- ((1-\ve_n Y)\vp(S + Y)).
\end{align*}
Taking expectation on both sides we get
\[
\ee(\ve_n\varphi(S + Y)) = \ee((1-\ve_n Y)\vp(S + Y)).
\]
By symmetry, this gives
\[
\ee(S\varphi(S+Y)) = \ee((n - SY) \vp(S+Y)).
\]
Again, by Lemma \ref{intparts}, we have
\[
\ee(Y \varphi(S+Y)) = \frac{1}{2}\ee((1-Y^2)\vp(S+Y)).
\]
Thus, putting $\ts = S+Y$ and 
\[
T = n - SY + \frac{1-Y^2}{2},
\]
we have
\begin{equation}\label{tseq}
\ee(\ts \varphi(\ts)) = \ee(T\vp(\ts)).
\end{equation}
Let $\sigma^2 = n$. Then 
\begin{align*}
\frac{(T-\sigma^2)^2}{\sigma^2} &\le \frac{2S^2+ \frac{1}{2}}{n}.
\end{align*}
Now, clearly, $\ee(\ts) = 0$ and $\ee(\ts^2) <\infty$. The equation \eqref{tseq} holds and the random variable $T$ is a.s.\  bounded.  Therefore, all conditions for applying  Theorem \ref{coupling} to $\ts$ are met, and hence we can conclude that it is possible to construct a version of $\ts$ and a $N(0,\sigma^2)$ random variable $Z$ on the same space such that for all~$\theta$,
\[
\ee\exp(\theta|\ts-Z|) \le 2\ee\exp(2\theta^2\sigma^{-2}(T-\sigma^2)^2).
\]
Since the value of $S$ is determined if we know $\ts$, we can now construct a version of $S$ on the same probability space satisfying $|S - \ts|\le 1$. It follows that
\[
\ee\exp(\theta|S-Z|) \le 2\ee\exp(|\theta| + 2\theta^2\sigma^{-2}(T-\sigma^2)^2).
\]
Using the bound on $(T-\sigma^2)^2/\sigma^2$ obtained above, we have
\[
\ee\exp(\theta|S-Z|) \le 2\exp(|\theta| + \theta^2/n) \ee \exp(4\theta^2 S^2/n).
\]
To complete the argument, note that if $V$ is a standard gaussian r.v., independent of $S$, then 
\begin{align*}
\ee \exp(4\theta^2 S^2/n) &= \ee\exp(\sqrt{8}\theta VS/\sqrt{n})\\
&= \ee(\ee(\exp(\sqrt{8}\theta V\ve_1/\sqrt{n})|V)^n)\\
&= \ee(\cosh^n(\sqrt{8}\theta V/\sqrt{n})).
\end{align*}
Using the simple inequality $\cosh x \le \exp x^2$, this gives 
\begin{equation}\label{s2bd}
\ee \exp(4\theta^2 S^2/n) \le \ee\exp(8\theta^2 V^2)  = \frac{1}{\sqrt{1-16\theta^2}} \ \text{ if } 16\theta^2 < 1.
\end{equation}
The conclusion now follows by choosing $\theta_0$ sufficiently small. 
\end{proof}

\begin{lmm}\label{momentbd}
Let all notation be as in the statement of Theorem \ref{coupling2}. 
Then for any $\theta \in \rr$ and any $1\le k\le n$, we have
\[
\ee\exp(\theta W_k/\sqrt{k}) \le \exp \theta^2.
\]
\end{lmm}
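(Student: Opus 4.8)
The plan is to recognise $W_k$ as a sum drawn without replacement from a finite, mean-zero population and then apply the two classical estimates of Hoeffding. Write $\bar\ve := S_n/n$ and $c_i := \ve_i - \bar\ve$, so that $\sum_{i=1}^n c_i = 0$ and
\[
W_k = S_k - \frac{k}{n}S_n = \sum_{\ell=1}^k\bigl(\ve_{\pi(\ell)} - \bar\ve\bigr) = \sum_{\ell=1}^k c_{\pi(\ell)};
\]
thus $W_k$ is exactly the sum of the first $k$ elements of a uniformly random ordering of the list $c_1,\ldots,c_n$. Since each $\ve_i\in\{-1,1\}$ and $\bar\ve\in[-1,1]$, every $c_i$ lies in the interval $[-1-\bar\ve,\,1-\bar\ve]$, which has length $2$.

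The key step is Hoeffding's comparison inequality between sampling with and without replacement \cite{hoeffding63}: for any continuous convex function $g$, if $U_1,\ldots,U_k$ are i.i.d.\ uniform on $\{1,\ldots,n\}$, then
\[
\ee\, g\Bigl(\sum_{\ell=1}^k c_{\pi(\ell)}\Bigr) \le \ee\, g\Bigl(\sum_{\ell=1}^k c_{U_\ell}\Bigr).
\]
Apply this with $g(x)=e^{tx}$, $t=\theta/\sqrt k$. On the right-hand side the variables $c_{U_1},\ldots,c_{U_k}$ are i.i.d., each with mean $\frac1n\sum_i c_i = 0$ and range inside an interval of length $2$, so Hoeffding's lemma gives $\ee\,e^{t c_{U_\ell}}\le e^{t^2/2}$; by independence,
\[
\ee\exp\bigl(\theta W_k/\sqrt k\bigr) \le \bigl(\ee\,e^{t c_{U_1}}\bigr)^k \le e^{kt^2/2} = e^{\theta^2/2} \le e^{\theta^2},
\]
which is the asserted bound (with room to spare), uniformly in $k$ and in the choice of $\ve_1,\ldots,\ve_n$.

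The only delicate point is the passage to independent summands. One cannot simply condition on $\ve_{\pi(1)},\ldots,\ve_{\pi(\ell-1)}$ and estimate the conditional moment generating function of the next increment by Hoeffding's lemma, because that conditional increment no longer has mean zero; Hoeffding's combinatorial comparison is designed precisely to circumvent this. If a self-contained argument were preferred, one could instead use a bounded-differences (Azuma-type) inequality for functions of a random permutation, noting that transposing two entries of $\pi(1),\ldots,\pi(n)$ changes $W_k$ by at most $2$; this also yields a bound of the form $e^{c\theta^2}$ with $c$ independent of $k$, and the constant $e^{\theta^2}$ here is generous enough to absorb the loss.
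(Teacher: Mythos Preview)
Your proof is correct and takes a genuinely different route from the paper. You recognise $W_k$ as a without-replacement sum from a centred population and then invoke Hoeffding's comparison theorem (sampling without replacement is dominated by sampling with replacement for convex functionals) together with Hoeffding's lemma for bounded mean-zero variables. This yields the sharper bound $e^{\theta^2/2}$ with essentially no computation.

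The paper instead argues directly via an \emph{exchangeable-pairs} differential inequality: writing $W_k = \frac{1}{n}\sum_{i\le k}\sum_{j>k}(\ve_{\pi(i)}-\ve_{\pi(j)})$, differentiating $m(\theta)=\ee e^{\theta W_k/\sqrt{k}}$, and then, for each pair $(i,j)$, swapping $\pi(i)$ and $\pi(j)$ to antisymmetrise the increment. Combined with $|e^x-e^y|\le \tfrac12|x-y|(e^x+e^y)$ this gives $|m'(\theta)|\le 2|\theta|m(\theta)$, hence $m(\theta)\le e^{\theta^2}$.

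Your argument is shorter and sharper, and treats the without-replacement dependence by a single classical citation. The paper's argument is self-contained and stays within the Stein/exchangeable-pairs idiom that drives the rest of the article (the same transposition device reappears in the proof of Theorem~\ref{coupling2}). Either proof suffices for the sequel, since Lemma~\ref{moment2} only uses the bound $e^{\theta^2}$ qualitatively.
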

\noindent {\it Remark.} Note that the bound does not depend on the value of $S_n$. This is crucial for the next lemma and the induction step later on. Heuristically, this phenomenon is not mysterious because the centered process $(W_k)_{k\le n}$ has maximum freedom to fluctuate when $S_n = 0$.
\begin{proof}
Fix $k$, and let $m(\theta) := \ee\exp(\theta W_k/\sqrt{k})$. Since $W_k$ is a bounded random variable, there is no problem in showing that $m$ is differentiable and 
\[
m'(\theta) = \frac{1}{\sqrt{k}}\ee(W_k \exp(\theta W_k/\sqrt{k})). 
\]
Now note that
\begin{align*}
\frac{1}{n}\sum_{i=1}^k \sum_{j=k+1}^n (\ve_{\pi(i)} - \ve_{\pi(j)}) &= \frac{(n-k) \sum_{i=1}^k \ve_{\pi(i)} - k \sum_{j=k+1}^n \ve_{\pi(j)}}{n}\\
&= \frac{(n-k) \sum_{i=1}^k \ve_{\pi(i)} - k (S_n - \sum_{i=1}^k \ve_{\pi(i)})}{n}\\
&= \sum_{i=1}^k \ve_{\pi(i)} - \frac{kS_n}{n} = W_k.
\end{align*}
Thus, 
\begin{equation}\label{mprime}
m'(\theta) = \frac{1}{n\sqrt{k}}\sum_{i=1}^k \sum_{j=k+1}^n \ee((\ve_{\pi(i)} - \ve_{\pi(j)})\exp(\theta W_k /\sqrt{k})).
\end{equation}
Now fix $i \le k < j$. Let $\pi' = \pi\circ (i,j)$, so that $\pi'(i) = \pi(j)$ and $\pi'(j) = \pi(i)$. Then $\pi'$ is again uniformly distributed on the set of all permutations of $\{1,\ldots,n\}$. Moreover, $(\pi,\pi')$ is an {\it exchangeable pair} of random variables. Let 
\[
W_k' = \sum_{\ell=1}^k \ve_{\pi'(\ell)} - \frac{k S_n}{n}.
\]
Then 
\begin{align*}
\ee((\ve_{\pi(i)} - \ve_{\pi(j)})\exp(\theta W_k/ \sqrt{k})) &= \ee((\ve_{\pi'(i)} - \ve_{\pi'(j)})\exp(\theta W_k' /\sqrt{k}))\\
&= \ee((\ve_{\pi(j)} - \ve_{\pi(i)})\exp(\theta W_k' /\sqrt{k})).
\end{align*}
Averaging the two equal quantities, we get
\begin{align*}
&\ee((\ve_{\pi(i)} - \ve_{\pi(j)})\exp(\theta W_k/ \sqrt{k}))\\
&= \frac{1}{2} \ee((\ve_{\pi(i)} - \ve_{\pi(j)})(\exp(\theta W_k/ \sqrt{k}) - \exp(\theta W_k'/\sqrt{k}))).
\end{align*}
Thus, from the inequality
\[
|e^x - e^y| \le \frac{1}{2}|x-y|(e^x + e^y)
\]
and the fact that $W_k - W_k' = \ve_{\pi(i)} - \ve_{\pi(j)}$, we get
\begin{align*}
&\bigl|\ee((\ve_{\pi(i)} - \ve_{\pi(j)})\exp(\theta W_k/ \sqrt{k}))\bigr|\\
&\le \frac{|\theta|}{4\sqrt{k}} \ee((\ve_{\pi(i)} - \ve_{\pi(j)})^2(\exp(\theta W_k/ \sqrt{k}) + \exp(\theta W_k'/\sqrt{k})))\\
&\le \frac{|\theta|}{\sqrt{k}} \ee(\exp(\theta W_k/ \sqrt{k}) + \exp(\theta W_k'/\sqrt{k}))\\
&= \frac{2|\theta|}{\sqrt{k}} \ee\exp(\theta W_k/ \sqrt{k}) = \frac{2|\theta|}{\sqrt{k}} m(\theta).
\end{align*}
Using this estimate in \eqref{mprime}, we get
\[
|m'(\theta)|\le\frac{2|\theta|}{nk}\sum_{i=1}^k \sum_{j=k+1}^n m(\theta) \le 2|\theta|m(\theta).
\]
Using that $m(0) =1$, it is now easy to complete the proof.
\end{proof}

\begin{lmm}\label{moment2}
Let us continue with the notation of Theorem \ref{coupling2}. There exists a universal constant $\alpha_0 >0$ such that for all $n$, all possible values of $S_n$, all $k$ such that $k\le 2n/3$, and all $\alpha \le \alpha_0$, we have
\[
\ee \exp(\alpha S_k^2/k) \le \exp\biggl(1 + \frac{3\alpha S_n^2}{4n}\biggr).
\]
\end{lmm}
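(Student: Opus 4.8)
The plan is to deduce everything from the linear exponential moment bound of Lemma~\ref{momentbd} by writing $S_k = W_k + kS_n/n$ and stripping off the quadratic dependence on $W_k$ with an auxiliary Gaussian. Expanding the square,
\[
\frac{\alpha S_k^2}{k} = \frac{\alpha W_k^2}{k} + \frac{2\alpha S_n}{n}\,W_k + \frac{\alpha k S_n^2}{n^2},
\]
and pulling out the deterministic last term,
\[
\ee\exp\Bigl(\frac{\alpha S_k^2}{k}\Bigr) = \exp\Bigl(\frac{\alpha k S_n^2}{n^2}\Bigr)\;\ee\exp\Bigl(\frac{\alpha W_k^2}{k} + \frac{2\alpha S_n}{n}\,W_k\Bigr).
\]

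To handle the remaining expectation, let $V\sim N(0,1)$ be independent of $\pi$ and write $\ee_V$ for expectation in $V$ alone. Since $e^{u^2/2} = \ee_V e^{uV}$, taking $u = W_k\sqrt{2\alpha/k}$ yields $\exp(\alpha W_k^2/k) = \ee_V\exp(W_k\sqrt{2\alpha/k}\,V)$; combining with the linear term and using Fubini (legitimate since $W_k$ is bounded),
\[
\ee\exp\Bigl(\frac{\alpha W_k^2}{k} + \frac{2\alpha S_n}{n}\,W_k\Bigr) = \ee_V\,\ee\exp\Bigl(\frac{W_k}{\sqrt{k}}\Bigl(\sqrt{2\alpha}\,V + \frac{2\alpha S_n\sqrt{k}}{n}\Bigr)\Bigr) \le \ee_V\exp\Bigl(\Bigl(\sqrt{2\alpha}\,V + \frac{2\alpha S_n\sqrt{k}}{n}\Bigr)^2\Bigr),
\]
the inequality being Lemma~\ref{momentbd} applied conditionally on $V$ with $\theta = \sqrt{2\alpha}\,V + 2\alpha S_n\sqrt{k}/n$ (valid since $1\le k\le 2n/3\le n$). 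The surviving quantity is an elementary Gaussian integral: for $4\alpha<1$, completing the square gives $\ee_V\exp\bigl((\sqrt{2\alpha}\,V+\beta)^2\bigr) = (1-4\alpha)^{-1/2}\exp\bigl(\beta^2/(1-4\alpha)\bigr)$, here with $\beta^2 = 4\alpha^2 S_n^2 k/n^2$.

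Assembling the bounds, the two $S_n^2$-terms merge into one fraction and one obtains
\[
\ee\exp\Bigl(\frac{\alpha S_k^2}{k}\Bigr) \le \frac{1}{\sqrt{1-4\alpha}}\,\exp\Bigl(\frac{\alpha k S_n^2}{n^2(1-4\alpha)}\Bigr).
\]
At this point the hypothesis $k\le 2n/3$ finally enters: it gives $\alpha k/n^2 \le (2/3)\alpha/n$, so the exponent is at most $\tfrac{2}{3(1-4\alpha)}\cdot\tfrac{\alpha S_n^2}{n}$, which is $\le \tfrac34\cdot\tfrac{\alpha S_n^2}{n}$ exactly when $2/(1-4\alpha)\le 9/4$, i.e.\ $\alpha\le 1/36$; for such $\alpha$ one also has $(1-4\alpha)^{-1/2}\le(8/9)^{-1/2} < e$, so $\alpha_0 = 1/36$ works. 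There is no genuine obstacle here beyond the arithmetic of the constants; the point worth noting is that the slack between the factor $2/3$ (forced by $k/n$) and the target $3/4$ is precisely what lets us both absorb the $(1-4\alpha)^{-1/2}$ prefactor into $e^1$ and swallow the $O(\alpha)$ correction from the Gaussian integral --- which is why the lemma is restricted to $k\le 2n/3$ rather than all $k\le n$.
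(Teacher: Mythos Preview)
Your proof is correct and follows essentially the same route as the paper: Gaussian linearization of the square, an application of Lemma~\ref{momentbd} conditionally on the Gaussian, and then the explicit computation of the resulting Gaussian integral, arriving at the identical intermediate bound $(1-4\alpha)^{-1/2}\exp\bigl(\alpha k S_n^2/(n^2(1-4\alpha))\bigr)$. The only cosmetic difference is that the paper linearizes $S_k^2$ directly and then splits $S_k=W_k+kS_n/n$ inside the exponent, whereas you split first and linearize $W_k^2$; your version has the minor bonus of making the choice $\alpha_0=1/36$ explicit.
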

\noindent {\it Remark.} The exact value of the constant $3/4$ in the above bound is not important; what is important is that the constant is $ < 1$ as long as we take $k\le 2n/3$. This is why the induction argument can be carried out in Section \ref{induction}. However, there is no mystery; the fact that one can always get a constant $< 1$ can be explained via simple heuristic arguments once Lemma \ref{momentbd} is known.
\begin{proof}
Let $Z$ be an independent standard gaussian random variable. Then
\begin{align*}
\ee\exp(\alpha S_k^2/k) &= \ee\exp\biggl(\sqrt{\frac{2\alpha}{k}} Z S_k\biggr)\\
&= \ee\exp\biggl(\sqrt{\frac{2\alpha}{k}} Z W_k  + \sqrt{\frac{2\alpha}{k}}\frac{kS_n}{n} Z \biggr).
\end{align*}
Now, by Lemma \ref{momentbd} we have
\[
\ee\biggl(\exp\biggl(\sqrt{\frac{2\alpha}{k}} Z W_k \biggr)\biggl| Z\biggr) \le \exp(2\alpha Z^2).
\]
Thus, we have
\begin{align*}
\ee\exp(\alpha S_k^2/k) &\le \ee\exp\biggl(2\alpha Z^2  + \sqrt{\frac{2\alpha}{k}}\frac{kS_n}{n} Z \biggr).
\end{align*}
Since $S_n$ is nonrandom, the right hand side is just the expectation of a function of a standard gaussian random variable, which can be easily computed. This gives, for $0< \alpha < 1/4$,
\[
\ee\exp(\alpha S_k^2/k) \le \frac{1}{\sqrt{1-4\alpha}} \exp\biggl(\frac{\alpha k S_n^2}{(1-4\alpha)n^2}\biggr).
\]
The lemma is now proved by bounding $k$ by $2n/3$ and choosing $\alpha_0$ small enough to ensure that $1/(1-4\alpha_0)$ is sufficiently close to $1$. 
\end{proof}

\begin{proof}[Proof of Theorem \ref{coupling2}]
For simplicity, we shall write $W$ for $W_k$ and $S$ for $S_n$, but $S_k$ will be written as usual. 

Let $Y$ be a random variable independent of $\pi$ and uniformly distributed on the interval $[-1,1]$. Fix $1\le i\le k$ and $k< j\le n$. Suppose we are given the values of $\{\pi(\ell), \ell\ne i,j\}$. Let $\ee^-$ denote the conditional expectation given this information. Let 
\[
S^- = \sum_{\ell\ne i,j} \ve_{\pi(\ell)}, \ \ W^- = \sum_{\ell\le k, \ell \ne i} \ve_{\pi(\ell)} - \frac{kS}{n}.
\]
If $S\ne S^-$, then we must have $\ve_{\pi(i)} = \ve_{\pi(j)}$, and hence in that case 
\[
\ee^-((\ve_{\pi(i)}-\ve_{\pi(j)}) \varphi(W + Y)) = 0.
\]
Next let us consider the only other possible scenario, $S = S^-$. Then the conditional distribution of $\ve_{\pi(i)}-\ve_{\pi(j)}$ is symmetric over $\{-2,2\}$. Let
\[
X = \frac{\ve_{\pi(i)}-\ve_{\pi(j)}}{2} = \ve_{\pi(i)},
\]
and note that 
\[
W = W^- + X.
\]
Thus, under $S=S^-$, Lemma \ref{intparts} shows that for all Lipschitz $\varphi$,
\begin{align*}
\ee^-((\ve_{\pi(i)}-\ve_{\pi(j)}) \varphi(W + Y)) &= 2\ee^-(X\varphi(W^- + X + Y)) \\
&= 2\ee^- ((1-XY)\vp(W + Y))\\
&= \ee^-((2-(\ve_{\pi(i)}-\ve_{\pi(j)})Y) \vp(W+Y)).
\end{align*}
Next, let
\[
a_{ij} := 1-\ve_{\pi(i)}\ve_{\pi(j)} - (\ve_{\pi(i)}-\ve_{\pi(j)}) Y. 
\]
A simple verification shows that
\[
a_{ij} = 
\begin{cases}
2-(\ve_{\pi(i)}-\ve_{\pi(j)})Y &\text{ if } \ve_{\pi(i)} \ne \ve_{\pi(j)} \\
0 &\text{ if } \ve_{\pi(i)} = \ve_{\pi(j)}.
\end{cases}
\]
Thus, irrespective of whether $S = S^-$ or $S\ne S^-$, we have
\[
\ee^-((\ve_{\pi(i)}-\ve_{\pi(j)}) \varphi(W + Y)) = \ee^-(a_{ij}\vp(W+Y)).
\]
Clearly, we can now replace $\ee^-$ by $\ee$ in the above expression. Now, as in the proof of Lemma \ref{momentbd}, observe that
\begin{align*}
W = \frac{1}{n}\sum_{i=1}^k \sum_{j=k+1}^n (\ve_{\pi(i)} - \ve_{\pi(j)}).
\end{align*}
Combining the last two observations, we have
\[
\ee(W\varphi(W+Y)) = \ee\biggl(\biggl(\frac{1}{n}\sum_{i=1}^k \sum_{j=k+1}^n a_{ij}\biggr) \vp(W+Y)\biggr).
\]
Again, by Lemma \ref{intparts}, we have
\[
\ee(Y \varphi(W+Y)) = \frac{1}{2}\ee((1-Y^2)\vp(W+Y)).
\]
Thus, putting $\tw = W+Y$ and 
\[
T = \frac{1}{n}\sum_{i=1}^k \sum_{j=k+1}^n a_{ij} + \frac{1-Y^2}{2},
\]
we have
\begin{equation}\label{tweq}
\ee(\tw \varphi(\tw)) = \ee(T\vp(\tw)).
\end{equation}
Now
\begin{align*}
\frac{1}{n}\sum_{i=1}^k \sum_{j=k+1}^n a_{ij} &= \frac{k(n-k)}{n} - \frac{(\sum_{i=1}^k \ve_{\pi(i)})(\sum_{j=k+1}^n \ve_{\pi(j)})}{n} - WY.
\end{align*}
Let $\sigma^2 = k(n-k)/n$. Since $n/3\le k\le 2n/3$ and $|W| \le |S_k| + \frac{2}{3}|S|$, a simple computation gives 
\begin{align*}
\frac{(T-\sigma^2)^2}{\sigma^2} &\le \frac{n}{k(n-k)} (|S_k| + |W| + 1/2)^2\\
&\le C\biggl(\frac{S_k^2}{k} + \frac{S^2}{n} + 1\biggr),
\end{align*}
where $C$ is a universal constant.

Now, clearly, $\ee(\tw) = 0$ and $\ee(\tw^2)<\infty$. The equation \eqref{tweq} holds and the random variable $T$ is a.s.\ bounded.  Therefore, all conditions for applying Theorem \ref{coupling} to $\tw$ are met, and hence we can conclude that it is possible to construct a version of $\tw$ and a $N(0,\sigma^2)$ random variable $Z$ on the same space such that for all $\theta$,
\[
\ee\exp(\theta|\tw-Z|) \le 2\ee\exp(2\theta^2\sigma^{-2}(T-\sigma^2)^2).
\]
Since the value of $W$ is determined if we know $\tw$, we can now construct a version of $W$ on the same probability space satisfying $|W - \tw|\le 1$. It follows that
\[
\ee\exp(\theta|W-Z|) \le 2\ee\exp(|\theta| + 2\theta^2\sigma^{-2}(T-\sigma^2)^2).
\]
Using the bound on $(T-\sigma^2)^2/\sigma^2$ obtained above, we have
\[
\ee\exp(\theta|W-Z|) \le 2\exp(|\theta| + C\theta^2 S^2/n + C\theta^2) \ee \exp(C\theta^2 S_k^2/k),
\]
where, again, $C$ is a universal constant.
The conclusion now follows from Lemma \ref{moment2} by choosing $\theta$ sufficiently small. 
\end{proof}

\section{The induction step}\label{induction}
The goal of this section is to prove the following theorem, which couples a pinned random walk with a Brownian Bridge. The tools used are Theorem~\ref{coupling2} and induction. The induction hypothesis, properly formulated, allows us to get rid of the dyadic construction of the usual KMT proofs. The following is an alternative statement of Theorem \ref{mainthm2}, given here for the convenience of the reader.
\begin{thm}\label{maincoupling}
Let us continue with the notation of Theorem \ref{coupling2}. There exist positive universal constants $C$, $K$ and $\lambda_0$ such that the following is true. For any $n\ge 2$, and any possible value of $S_n$, it is possible to construct a version of $W_0,W_1,\ldots,W_n$ and gaussian r.v.\ $Z_0,Z_1,\ldots,Z_n$ with mean zero and
\begin{equation}\label{covform}
\cov(Z_i,Z_j) = \frac{(i\wedge j)(n- (i\vee j))}{n}
\end{equation}
on the same probability space such that for any $\lambda \in (0,\lambda_0)$, 
\[
\ee\exp(\lambda \max_{i\le n} |W_i - Z_i|) \le \exp\biggl(C\log n + \frac{K\lambda^2 S_n^2}{n}\biggr).
\]
\end{thm}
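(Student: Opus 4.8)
The plan is to prove Theorem~\ref{maincoupling} by induction on $n$, splitting the walk at roughly its midpoint: couple the midpoint value with a Gaussian by Theorem~\ref{coupling2}, apply the induction hypothesis to the two halves, and glue the pieces together using the decomposition of a Brownian bridge at an interior point. Since $\ve_1,\ldots,\ve_n$ are exchangeable $\pm1$ variables and $S_n$ is fixed, the sequence is a uniformly random arrangement of a fixed number of $+1$'s and $-1$'s, which is exactly the setting of Theorem~\ref{coupling2} and Lemma~\ref{moment2}. The universal constants $C,K,\lambda_0$ are chosen at the very end. The base cases $n\in\{2,3\}$ are immediate: $|W_i|\le n$ and each $Z_i$ has variance at most $n/4$, so taking $Z$ independent of $W$ bounds $\ee\exp(\lambda\max_i|W_i-Z_i|)$ by an absolute constant, which is dominated by $e^{C\log n}$ once $C$ is large.

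For the inductive step fix $n\ge 4$ and set $k=\lfloor n/2\rfloor$, so that $n/3\le k\le 2n/3$ (hence also $n-k\le 2n/3$) and $2\le k,\,n-k\le n-1$. First couple $W_k$ with $Z_k\sim N(0,k(n-k)/n)$ by Theorem~\ref{coupling2}. The key algebraic identities are $W_i=W'_i+\tfrac{i}{k}W_k$ for $0\le i\le k$, where $W'_i:=S_i-\tfrac{i}{k}S_k$, and $W_i=W''_i+\tfrac{n-i}{n-k}W_k$ for $k\le i\le n$, where $W''_i:=(S_i-S_k)-\tfrac{i-k}{n-k}(S_n-S_k)$; here $(W'_i)_{0\le i\le k}$ is the bridge-centred walk for the prefix $\ve_1,\ldots,\ve_k$, and $(W''_i)_{k\le i\le n}$, after reindexing, is the bridge-centred walk for $\ve_{k+1},\ldots,\ve_n$. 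Conditionally on $S_k$, which determines $W_k$ and hence the conditional law of $Z_k$, the prefix and the suffix are independent uniformly random arrangements with sums $S_k$ and $S_n-S_k$. Apply the induction hypothesis to each of them to obtain a length-$k$ Brownian bridge $Z'$ coupled to $W'$ and an independent length-$(n-k)$ Brownian bridge $Z''$ coupled to $W''$, both independent of $Z_k$ (possible since the law of each bridge produced by the induction hypothesis does not depend on the value of the relevant partial sum). Set $Z_i:=Z'_i+\tfrac{i}{k}Z_k$ on $[0,k]$ and $Z_i:=Z''_i+\tfrac{n-i}{n-k}Z_k$ on $[k,n]$. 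A direct computation of covariances, using $\var(Z_k)=k(n-k)/n$, the independence of $Z',Z'',Z_k$, and $Z'_k=Z''_k=0$, shows that $(Z_0,\ldots,Z_n)$ has the covariance structure \eqref{covform}.

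It remains to propagate the exponential-moment bound. From the identities above, $|W_i-Z_i|\le\max(M'_k,M''_{n-k})+|W_k-Z_k|$ for every $i$, where $M'_k:=\max_{0\le i\le k}|W'_i-Z'_i|$ and $M''_{n-k}:=\max_{k\le i\le n}|W''_i-Z''_i|$; hence, with $M_n:=\max_{i\le n}|W_i-Z_i|$, $\ee\exp(\lambda M_n)\le\ee\bigl[(e^{\lambda M'_k}+e^{\lambda M''_{n-k}})\,e^{\lambda|W_k-Z_k|}\bigr]$. Conditionally on $S_k$ the factor $e^{\lambda|W_k-Z_k|}$ is independent of $M'_k$ and $M''_{n-k}$, so write this as $\ee[a(S_k)\,b(S_k)]$ with $b(S_k):=\ee[e^{\lambda|W_k-Z_k|}\mid S_k]$ and $a(S_k):=\ee[e^{\lambda M'_k}\mid S_k]+\ee[e^{\lambda M''_{n-k}}\mid S_k]$, the latter being at most $(\max(k,n-k))^{C}\bigl(e^{K\lambda^2S_k^2/k}+e^{K\lambda^2(S_n-S_k)^2/(n-k)}\bigr)$ by the induction hypothesis. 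Now apply the Cauchy--Schwarz inequality in $S_k$: the $L^2$ norm of $b$ satisfies $\|b\|_2^2\le\ee\,e^{2\lambda|W_k-Z_k|}\le\exp(1+4c\lambda^2S_n^2/n)$ by Theorem~\ref{coupling2} with $\theta=2\lambda$ (which forces $\lambda_0\le\theta_0/2$), and $\|a\|_2^2\le 4\,(\max(k,n-k))^{2C}\exp\bigl(1+3K\lambda^2S_n^2/(2n)\bigr)$ by Lemma~\ref{moment2} applied to each half with $\alpha=2K\lambda^2$, using $k,n-k\le 2n/3$ (which forces $2K\lambda_0^2\le\alpha_0$). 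Multiplying and comparing with the target $\exp(C\log n+K\lambda^2S_n^2/n)$, the induction closes provided $\tfrac{3K}{4}+2c\le K$, i.e.\ $K=8c$, and $2e\,(\max(k,n-k))^{C}\le n^{C}$, which holds for all $n\ge 4$ once $C$ is a sufficiently large absolute constant, since $\max(k,n-k)=\lceil n/2\rceil\le\tfrac{2}{3}n$.

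The main obstacle is exactly this final comparison of exponents, and it is where splitting at the midpoint is indispensable. After Cauchy--Schwarz the midpoint coupling contributes a factor $e^{2c\lambda^2S_n^2/n}$ and each half contributes $e^{3K\lambda^2S_n^2/(4n)}$, the constant $3/4<1$ coming from Lemma~\ref{moment2} (whose proof rests on Lemma~\ref{momentbd}); these exponents add up to strictly less than $K\lambda^2S_n^2/n$ only because $k$ and $n-k$ are bounded away from $n$, which is also what allows the multiplicative loss of $2e$ per recursion level to be absorbed into the $e^{C\log n}$ factor. Everything else --- the covariance bookkeeping for the bridge decomposition, the conditional-independence assertions, and the treatment of the base cases --- is routine.
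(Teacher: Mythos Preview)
Your proof is correct and follows essentially the same route as the paper's: induction on $n$ with a split at $k=\lfloor n/2\rfloor$, the midpoint coupled via Theorem~\ref{coupling2}, the two halves handled by the induction hypothesis, the Gaussian reassembled by the standard Brownian-bridge decomposition, and the induction closed by Cauchy--Schwarz together with Lemma~\ref{moment2}, yielding the same constant choices $K=8c$, $2K\lambda_0^2\le\alpha_0$, $2\lambda_0\le\theta_0$, and $C\ge(1+\log 2)/\log(3/2)$. The only cosmetic differences are that the paper works explicitly with joint densities and conditions on the pair $(S_k,Z_k)$ (equivalently on $S_k$, since the conditional law of the halves depends only on $S_k$), and it treats the left and right maxima separately before adding, whereas you bundle them into a single $a(S_k)$; the arithmetic is identical.
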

\begin{proof}
Recall the universal constants $\alpha_0$ from Lemma \ref{moment2} and $c$ and $\theta_0$ from Theorem \ref{coupling2}. We contend that for carrying out the induction step, it suffices to take
\begin{align}\label{consts}
K = 8 c, \  \lambda_0 \le  \sqrt{\frac{\alpha_0}{16c}} \wedge \frac{\theta_0}{2}, \ \text{and} \ C \ge \frac{1 + \log 2}{\log (3/2)}.
\end{align}
Choosing the constants to satisfy these constraints, we will now prove the claim by induction on $n$. Now, for each $n$, and each possible value $a$ of $S_n$, let $f^n_a(\bs)$ denote the discrete probability density function of the sequence $(S_0,S_1, \ldots, S_n)$. Note that this is just the uniform distribution over $\ma^n_a$, where 
\begin{equation}\label{adef}
\ma_a^n := \{\bs \in \zz^{n+1}:  s_0 = 0, \ s_n = a, \ \text{and} \ |s_i-s_{i-1}| = 1 \ \text{for all} \ i.\}
\end{equation}
Thus, for any $\bs\in \ma_a^n$, 
\begin{equation}\label{fform}
f_a^n(\bs) = \frac{1}{|\ma_a^n|}.
\end{equation}
Let $\phi^n(\bz)$ denote the probability density function of a gaussian random vector $(Z_0,\ldots,Z_n)$ with mean zero and covariance \eqref{covform}.

We want to show that for each $n$, and each possible value $a$ of $S_n$, we can construct a joint probability density $\rho_a^n(\bs,\bz)$ on $\zz^{n+1}\times \rr^{n+1}$ such that 
\begin{equation}\label{marg1}
\int \rho_a^n (\bs,\bz)\, d\bz = f_a^n (\bs), \ \ \int  \rho_a^n (\bs, \bz) \, d\bs = \phi^n(\bz),
\end{equation}
and for each $\lambda < \lambda_0$, 
\[
\int \exp\biggl(\lambda \max_{i\le n} \biggl| s_i - \frac{ia}{n} - z_i\biggr| \biggr) \rho^n_a (\bs,\bz) \,d\bs \, d\bz \le \exp\biggl(C\log n + \frac{K\lambda^2 a^2}{n}\biggr).
\]
Suppose $\rho_a^k$ can be constructed for $k = 1,\ldots,n-1$, for allowed values of $a$ in each case. We will now demonstrate a construction of $\rho_a^n$ when $a$ is an allowed value for $S_n$. 

First, fix a possible value $a$ of $S_n$ and an index $k$ such that $n/3\le k\le 2n/3$ (for definiteness, take $k = [n/2]$). Given $S_n = a$, let $g^{n,k}_a(s)$ denote the density function of $S_k$. Recall the definition \eqref{adef} of $\ma_a^n$ and note that for all allowed values of $s$ of $S_k$, an elementary counting argument gives
\begin{equation}\label{gform}
g^{n,k}_a(s) = \frac{|\ma_s^k| |\ma_{a-s}^{n-k}|}{|\ma_a^n|}.
\end{equation}
Let $h^{n,k}(z)$ denote the density function of the gaussian distribution with mean $0$ and variance $k(n-k)/n$. By Theorem \ref{coupling2} and the inequality $\exp|x|\le \exp(x) + \exp(-x)$, we see that there exists a joint density function $\psi^{n,k}_a(s,z)$ on $\zz\times \rr$ such that 
\begin{equation}\label{marg2}
\int \psi^{n,k}_a(s,z) \, dz = g^{n,k}_a(s), \ \ \int \psi^{n,k}_a(s,z) \, ds = h^{n,k}(z),
\end{equation}
and for all $0 < \theta \le \theta_0$, 
\begin{equation}\label{tterm}
\int\exp\biggl(\theta\biggl|s - \frac{ka}{n} - z\biggr|\biggr) \psi^{n,k}_a(s,z)\, ds\, dz\le \exp\biggl(1 + \frac{c\theta^2 a^2}{n}\biggr).
\end{equation}
Now define a function $\gamma_a^n: \zz\times\rr \times \zz^{k+1}\times \rr^{k+1}\times \zz^{n-k+1}\times \rr^{n-k+1} \ra \rr$ as follows:
\begin{equation}\label{gammadef}
\gamma_a^n (s,z,\bs,\bz,\bs',\bz') := \psi_a^{n,k}(s,z) \rho_s^k(\bs,\bz) \rho_{a-s}^{n-k}(\bs',\bz').
\end{equation}
By integrating over $\bs', \bz'$, then $\bs,\bz$, and finally $s,z$, it is easy to verify that $\gamma_a^n$ is a probability density function (if either $a$ or $s$ is not an allowed value, then $\psi_a^{n,k}(s,z) = 0$, so there is no problem). 

Let $(S,Z, \bbs, \bbz, \bbs', \bbz')$ denote a random vector following the density $\gamma_a^n$. In words, this means the following: We are first generating $(S,Z)$ from the joint distribution $\psi_a^{n,k}$; given $S=s, Z=z$, we are independently generating the pairs $(\bbs, \bbz)$ and $(\bbs', \bbz')$ from the joint densities $\rho_s^k$ and $\rho_{a-s}^{n-k}$ respectively. 

Now define two random vectors $\bby\in \rr^{n+1}$ and $\bbu\in \zz^{n+1}$ as follows. For $i\le k$, let
\[
Y_i = Z_i + \frac{i}{k}Z,
\]
and for $i\ge k$, let
\[
Y_i = Z'_{i-k} + \frac{n-i}{n-k}Z.
\]
Note that the two definitions match at $i=k$ because $Z_k = Z'_0 = 0$. Next, define $U_i = S_i$ for $i\le k$ and $U_i = S + S_{i-k}'$ for $i\ge k$. Again, the definitions match at $i=k$ because $S_k = S$ and $S'_0 = 0$. {\it We claim that the joint density of $(\bbu, \bby)$ is a valid candidate for $\rho_a^n$.} The claim is proved in several steps.
\vskip.1in
1. {\it Marginal distribution of $\bbu$.} From equations \eqref{marg1} and \eqref{marg2} it is easy to see that
\[
\int \gamma_a^n(s,z,\bs,\bz,\bs',\bz') \,  d\bz\, d\bz'\, dz = g_a^{n,k}(s) f_s^k(\bs)f_{a-s}^{n-k}(\bs').
\]
In other words, the distribution of the triplet $(S,\bbs, \bbs')$ can be described as follows: Generate $S$ from the distribution of $S_k$ given $S_n = a$; then independently generate $\bbs$ and $\bbs'$ from the conditional distributions $f_s^k$ and $f_{a-s}^{n-k}$. It should now be intuitively clear that $\bbu$ has marginal density $f_a^n$. Still, to be completely formal, we apply equations \eqref{fform} and \eqref{gform} to get
\[
g_a^{n,k}(s) f_s^k(\bs)f_{a-s}^{n-k}(\bs') = \frac{|\ma_s^k||\ma_{a-s}^{n-k}|}{|\ma_a^n|} \frac{1}{|\ma_s^k|} \frac{1}{|\ma_{a-s}^{n-k}|} = \frac{1}{|\ma_a^n|},
\]
and observe that there is a one-to-one correspondence between $(S,\bbs,\bbs')$ and $\bbu$, and $\bbu$ can take any value in $\ma_a^n$. 
\vskip.1in
2. {\it Marginal distribution of $\bby$.} First, we claim that $Z$, $\bbz$, and $\bbz'$ are independent with densities $h^{n,k}$, $\phi^k$, and $\phi^n$ respectively. Again, using \eqref{marg1} and \eqref{marg2}, this is easily seen as follows.
\begin{align*}
\int \gamma^n_a(s,z,\bs,\bz,\bs',\bz') \, d\bs' \, d\bs\, ds &= \int \psi_a^{n,k}(s,z) \rho_s^k(\bs,\bz) \rho_{a-s}^{n-k}(\bs',\bz')\, d\bs' \, d\bs\, ds\\
&=  \phi^{n-k}(\bz')\int \psi_a^n(s,z) \rho_s^k(\bs,\bz) d\bs\, ds\\
&= \phi^{n-k}(\bz')\phi^k(\bz) \int \psi_a^n(s,z) ds\\
&= \phi^{n-k}(\bz')\phi^k(\bz)h^{n,k}(z).
\end{align*}
Thus, $\bby$ is a gaussian random vector with mean zero. It only remains to compute $\cov(Y_i,Y_j)$. Considering separately the cases $i\le j\le k$, $k\le i\le j$, and $i\le k\le j$, it is now  straightforward to verify that $\cov(Y_i,Y_j) = i(n-j)/n$ in each case. Thus, $\bby \sim \phi^n$. 
\vskip.1in
3. {\it The exponential bound.} For $0\le i\le n$, let
\[
W_i = U_i - \frac{ia}{n}.
\]
We have to show that for $0< \lambda <\lambda_0$, 
\[
\ee\exp(\lambda \max_{i\le n} |W_i - Y_i|)\le \exp\biggl(C \log n + \frac{K\lambda^2 a}{n}\biggr),
\]
where $C$, $K$, and $\lambda_0$ are as in \eqref{consts}.
Now let
\[
T_L := \max_{i\le k} \biggl|S_i - \frac{iS}{k} - Z_i\biggr|, \ \ T_R := \max_{i\ge k} \biggl|S'_{i-k} - \frac{i-k}{n-k}(a-S) - Z'_{i-k}\biggr|,
\]
and
\[
T := \biggl|S - \frac{ka}{n} - Z\biggr|. 
\]
We claim that
\begin{equation}\label{maxineq}
\max_{i\le n} |W_i - Y_i|\le \max\{T_L, T_R\} + T.
\end{equation}
To prove this, first take any $i\le k$. Then
\begin{align*}
|W_i - Y_i| &= \biggl|S_i - \frac{ia}{n} - \biggl(Z_i + \frac{iZ}{k}\biggr)\biggr|\\
&\le \biggl|S_i - \frac{iS}{k} - Z_i\biggr| + \biggl|\frac{iS}{k} - \frac{ia}{n} -  \frac{iZ}{k}\biggr|\\
&\le T_L + \frac{i}{k} T \le T_L + T.
\end{align*}
Similarly, for $i\ge k$,
\begin{align*}
|W_i - Y_i| &= \biggl|S + S_{i-k}' - \frac{ia}{n} - \biggl(Z'_{i-k} + \frac{n-i}{n-k}Z\biggr)\biggr|\\
&\le \biggl|S_{i-k}' - \frac{i-k}{n-k}(a-S) - Z'_{i-k}\biggr|\\
&\qquad + \biggl|S + \frac{i-k}{n-k}(a-S) - \frac{ia}{n} - \frac{n-i}{n-k}Z\biggr|\\
&= \biggl|S_{i-k}' - \frac{i-k}{n-k}(a-S) - Z'_{i-k}\biggr| + \frac{n-i}{n-k}\biggl|S - \frac{ka}{n} - Z\biggr|\\
&\le T_R + T.
\end{align*}
This proves \eqref{maxineq}. Now fix $\lambda < \lambda_0$. Using the crude bound $\exp(x\vee y) \le \exp x + \exp y$, we get
\begin{equation}\label{maxbd}
\exp(\lambda \max_{i\le n}|W_i - Y_i|) \le \exp(\lambda T_L + \lambda T) + \exp(\lambda T_R + \lambda T).
\end{equation}
Now, by the construction \eqref{gammadef}, it is easy to check that given $(S,Z) = (s,z)$, the conditional density of $(\bbs,\bbz)$ is simply $\rho_s^k$. By the induction hypothesis, this implies that
\[
\ee(\exp(\lambda T_L)| S,Z) \le \exp\biggl(C \log k + \frac{K \lambda^2 S^2}{k}\biggr).
\]
It is easy to see that the moment generating functions of both $T_L$ and $T$ are finite everywhere, and hence there is no problem in applying the Cauchy-Schwarz inequality to get
\begin{align*}
\ee\exp(\lambda T_L + \lambda T)&\le \bigl[\ee\bigl(\ee(\exp(\lambda T_L)| S,Z)^2\bigr) \ee(\exp(2\lambda T))\bigr]^{1/2}\\
&\le \exp(C \log k) \biggl[\ee\exp\biggl(\frac{2K\lambda^2S^2}{k}\biggr) \ee \exp(2\lambda T)\biggr]^{1/2}.
\end{align*}
We wish to apply Lemma \ref{moment2} to bound the first term inside the bracket. Observe that by \eqref{consts}, we have
\[
2K\lambda^2 \le 16 c\cdot \frac{\alpha_0}{16c} = \alpha_0,
\]
and also $n/3\le k\le 2n/3$ by assumption. Hence Lemma \ref{moment2} can indeed be applied to get
\[
\ee\exp\biggl(\frac{2K\lambda^2S^2}{k}\biggr) \le \exp\biggl(1 + \frac{3K\lambda^2 a^2}{2n}\biggr).
\]
Next, note that by \eqref{consts}, $2\lambda \le \theta_0$. Hence by inequality \eqref{tterm} with $\theta = 2\lambda$, we get the bound
\[
\ee\exp(2\lambda T) \le \exp\biggl(1 + \frac{4c\lambda^2a^2}{n}\biggr).
\]
Combining the last three steps, we have
\begin{align*}
\ee\exp(\lambda T_L + \lambda T)&\le \exp\biggl(C\log k + 1 + \frac{(3K + 8c)\lambda^2a^2}{4n}\biggr).
\end{align*}
Now, by \eqref{consts}, $3K + 8c = 4K$. Again, since $n/3\le k\le 2n/3$, we have
\[
\log k = \log n - \log (n/k)\le \log n - \log (3/2).
\]
Thus,
\begin{align*}
\ee\exp(\lambda T_L + \lambda T)&\le 2^{1/2}\exp\biggl(C\log n - C\log (3/2) + 1 + \frac{K\lambda^2a^2}{n}\biggr).
\end{align*}
By the symmetry of the situation, we can get the exact same bound on $\ee\exp(\lambda T_R + \lambda T)$. Combined with \eqref{maxbd}, this gives
\begin{align*}
\ee\exp(\lambda \max_{i\le n}|W_i - Y_i|)&\le 2\exp\biggl(C\log n - C\log (3/2) + 1 + \frac{K\lambda^2a^2}{n}\biggr).
\end{align*}
Finally, from the condition on $C$ in \eqref{consts}, we see that 
\[
-C \log (3/2) + 1 + \log 2\le 0.
\]
This completes the induction step. To complete the argument, we just choose $C$ so large and $\lambda_0$ so small that the result is true for $n = 2$ even if the vectors $(W_0,W_1,W_2)$ and $(Z_0,Z_1, Z_2)$ are chosen to be independent of each other.
\end{proof}

\section{Completing the proofs of the main theorems}\label{complete}
In this final section, we put together the pieces to complete the proofs of Theorem \ref{mainthm2} and Theorem \ref{ourthm}. The following lemma combines Theorem \ref{maincoupling} and Theorem \ref{coupling3} to  give a `finite $n$ version' of Theorem~\ref{ourthm}.
\begin{lmm}\label{mainlmm}
There exist universal constants $B> 1$ and $\lambda > 0$ such that the following is true.
Let $n$ be a positive integer and let  $\ve_1,\ve_2,\ldots, \ve_n$ be i.i.d.\ symmetric $\pm 1$ random variables. Let $S_k = \sum_{i=1}^k \ve_i$, $k=0,1,\ldots,n$. It is possible to construct a version of the sequence $(S_k)_{k\le n}$ and gaussian random variables $(Z_k)_{k\le n}$ with mean~$0$ and $\cov(Z_i,Z_j) = i\wedge j$  on the same probability space such that $\ee\exp(\lambda |S_n - Z_n|) \le B$ and 
\[
\ee \exp( \lambda\max_{k\le n} |S_k - Z_k|) \le B\exp(B\log n).
\]
\end{lmm}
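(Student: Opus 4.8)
The plan is to glue together the two couplings already established: Theorem~\ref{coupling3}, which couples the endpoint $S_n$ with an $N(0,n)$ variable, and Theorem~\ref{maincoupling}, which couples the pinned walk $W_k:=S_k-\tfrac{k}{n}S_n$ with a Brownian bridge. Assume $n\ge 2$ (for $n=1$ the pinned walk is degenerate, and one simply sets $Z_1=G$ with $(S_1,G)$ the coupling of Theorem~\ref{coupling3}). The construction: draw $a$ from the distribution of $S_n$; given $a$, generate \emph{independently} (i) a pair $((W_k)_{k\le n},(\widetilde Z_k)_{k\le n})$ with the joint law provided by Theorem~\ref{maincoupling} with pinning value $a$, and (ii) a variable $G$ with the conditional law of the $N(0,n)$ variable given $S_n=a$, obtained by disintegrating the coupling of Theorem~\ref{coupling3}; then put $S_k:=W_k+\tfrac{k}{n}a$ and
\[
Z_k:=\widetilde Z_k+\frac{k}{n}\,G,\qquad 0\le k\le n.
\]

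Four verifications are needed. First, since the $(W_k)$-marginal of the Theorem~\ref{maincoupling} coupling is the law of the simple-random-walk path conditioned on endpoint $a$, and $a$ is drawn from the correct endpoint law, the sequence $(S_k)$ is distributed as the simple random walk. Second, $(S_n,G)$ has exactly the joint law of Theorem~\ref{coupling3}, so $G\sim N(0,n)$ and $\ee\exp(\theta_0|S_n-G|)\le\kappa$. Third --- the one slightly delicate point --- in Theorem~\ref{maincoupling} the $(\widetilde Z_k)$-marginal is the centered Gaussian with covariance \eqref{covform}, which does \emph{not} depend on $a$; hence $(\widetilde Z_k)$ is unconditionally that Gaussian vector and is independent of $a=S_n$. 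Since $(\widetilde Z_k)$ and $G$ are moreover conditionally independent given $S_n$ (they are generated independently given $a$), the identity
\[
\pp(\widetilde Z\in A,\ G\in B)=\ee\bigl[\pp(\widetilde Z\in A\mid S_n)\,\pp(G\in B\mid S_n)\bigr]=\pp(\widetilde Z\in A)\,\pp(G\in B)
\]
shows $(\widetilde Z_k)$ and $G$ are independent; therefore $(Z_k)$ is a centered jointly Gaussian vector with
\[
\cov(Z_i,Z_j)=\frac{(i\wedge j)(n-i\vee j)}{n}+\frac{ij}{n^2}\cdot n=\frac{(i\wedge j)(n-i\vee j)+ij}{n}=i\wedge j,
\]
as required. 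Fourth, since $\widetilde Z_n=0$ we have $Z_n=G$, so $\ee\exp(\lambda|S_n-Z_n|)\le\kappa$ whenever $\lambda\le\theta_0$.

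It remains to bound $\ee\exp(\lambda\max_{k\le n}|S_k-Z_k|)$. Writing $S_k=W_k+\tfrac{k}{n}a$ and using $k\le n$,
\[
|S_k-Z_k|=\Bigl|(W_k-\widetilde Z_k)+\tfrac{k}{n}(S_n-G)\Bigr|\le|W_k-\widetilde Z_k|+|S_n-G|,
\]
so $\max_{k\le n}|S_k-Z_k|\le\max_{k\le n}|W_k-\widetilde Z_k|+|S_n-G|$, and by Cauchy--Schwarz
\[
\ee\exp\bigl(\lambda\max_{k\le n}|S_k-Z_k|\bigr)\le\bigl(\ee\exp(2\lambda\max_{k\le n}|W_k-\widetilde Z_k|)\bigr)^{1/2}\bigl(\ee\exp(2\lambda|S_n-G|)\bigr)^{1/2}.
\]
For $2\lambda\le\theta_0$ the second factor is at most $\kappa^{1/2}$ by Theorem~\ref{coupling3}. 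For the first, conditioning on $S_n=a$ and applying Theorem~\ref{maincoupling} with $2\lambda<\lambda_0$ gives the conditional bound $\exp(C\log n+4K\lambda^2a^2/n)$; taking expectations and invoking the sub-Gaussian estimate $\ee\exp(\beta S_n^2/n)\le(1-4\beta)^{-1/2}$ (the computation already done in the proof of Theorem~\ref{coupling3}, valid for $4\beta<1$) bounds $\ee\exp(4K\lambda^2S_n^2/n)$ by a constant once $\lambda$ is small. Hence $\ee\exp(\lambda\max_{k\le n}|S_k-Z_k|)\le D\,e^{(C/2)\log n}$ for a universal $D$, and it suffices to take $\lambda$ small (so that $2\lambda<\lambda_0\wedge\theta_0$ and $16K\lambda^2<1$) and $B>1$ at least $\max\{C/2,D,\kappa\}$. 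The main obstacle is exactly the bookkeeping of the third verification --- that the bridge supplied by Theorem~\ref{maincoupling} is genuinely independent of the endpoint variable $G$ --- which rests on the fact that its law there does not depend on the pinning value; the rest is the Cauchy--Schwarz split together with the two black-box couplings.
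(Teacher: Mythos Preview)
Your proof is correct and follows essentially the same approach as the paper: both build the joint law as $\psi^n(s,z)\rho_s^n(\bs,\bz)$ (in your language, draw $a$, then independently sample the conditional Gaussian endpoint and the bridge coupling), verify the Gaussian marginal via the observation that the bridge law $\phi^n$ does not depend on $a$, and finish with the triangle inequality $|S_k-Z_k|\le|W_k-\widetilde Z_k|+|S_n-G|$, Cauchy--Schwarz, and the sub-Gaussian bound on $S_n^2/n$. The only cosmetic difference is that the paper applies Cauchy--Schwarz after taking the conditional expectation (squaring the conditional bound), whereas you apply it directly to the exponential (doubling $\lambda$ inside Theorem~\ref{maincoupling}); this shifts which smallness condition on $\lambda$ appears but changes nothing of substance.
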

\begin{proof}
Recall the universal constants $\theta_0$ and $\kappa$ from Theorem \ref{coupling3} and $C$, $K$, and $\lambda_0$ from Theorem \ref{maincoupling}. Choose $\lambda$ so small that 
\[
\lambda < \frac{\theta_0\wedge \lambda_0}{2} \ \text{ and } \ 16K\lambda^2 < 1.
\]
Let the probability densities $f_a^n$, $\rho_a^n$, and $\phi^n$ be as in the proof of Theorem~\ref{maincoupling}. Let $g^n$ and $h^n$ denote the densities of $S_n$ and $Z_n$ respectively. By Theorem~\ref{coupling3} and the choice of $\lambda$, there is a joint density $\psi^n$ on $\zz\times \rr$ such that
\[
\int \psi^n(s,z) \, dz = g^n(s), \ \ \int\psi^n(s,z) \, ds = h^n(z),
\]
and 
\begin{equation}\label{szeq}
\int \exp(2\lambda |s - z|) \psi^n(s,z) \, ds \, dz \le \kappa.
\end{equation}
Now define a function $\gamma^n: \zz \times \rr \times \zz^{n+1}\times \rr^{n+1} \ra \rr$ as
\[
\gamma^n(s,z,\bs,\bz) := \psi^n(s,z)\rho_s^n(\bs,\bz).
\]
It is easy to check that this is a probability density function. Let $(S,Z,\bbs,\bbz)$ be a random vector following this density. As in the proof of Theorem \ref{maincoupling}, an easy integration shows that the joint density of $(Z,\bbz)$ is simply
\[
h^n(z) \phi^n(\bz).
\]
Define a random vector $\bby = (Y_0,\ldots,Y_n)$ as 
\[
Y_i = Z_i + \frac{i}{n}Z.
\]
By the independence of $Z$ and $\bbz$ and their distributions, it follows that $\bby$ is a mean zero gaussian random vector with $\cov(Y_i, Y_j) = i\wedge j$. 

Next, integrating out $z$ and $\bz$ we see that the joint density of $(S,\bbs)$ is
\[
g^n(s)f^n_s(\bs).
\]
Elementary probabilistic reasoning now shows that the marginal distribution of $\bbs$ is the same as that of a simple random walk up to time $n$. 

Let us now show that the law of  the pair $(\bbs, \bby)$ satisfies the conditions of the theorem.
First, let $W_i = S_i - iS/n$. Note that for any $i\le n$,
\begin{align*}
|S_i - Y_i| &= \biggl|S_i - \biggl(Z_i + \frac{i}{n}Z\biggr)\biggr|\\
&\le |W_i - Z_i| + \frac{i}{n}|S - Z|.
\end{align*}
Note that the conditional distribution of $(\bbs, \bbz)$ given $(S,Z) = (s,z)$ is simply $\rho_s^n$. Since $\lambda < \lambda_0$, we have by the construction of $\rho_s^n$ that
\[
\ee\bigl(\exp(\lambda \max_{i\le n} |W_i - Z_i|)\bigl| S, Z\bigr) \le \exp\biggl( C\log n + \frac{K\lambda^2S^2}{n}\biggr).
\]
Thus, using the Cauchy-Schwarz inequality and \eqref{szeq}, we can now get
\begin{align*}
&\ee\exp(\lambda\max_{i\le n}|S_i - Y_i|)\\
&\le \bigl[\ee\bigl(\ee\bigl(\exp(\lambda \max_{i\le n} |W_i - Z_i|)\bigl| S, Z\bigr)^2\bigr) \ee\exp(2\lambda |S-Z|)\bigr]^{1/2}\\
&\le \exp(C\log n) \bigl[\kappa\ee\exp(2K\lambda^2 S^2/n) \bigr]^{1/2}.
\end{align*}
By inequality \eqref{s2bd} and the choice of $\lambda$, the proof of the maximal inequality is done. For the other inequality, note that we have \eqref{szeq} and $Y_n = Z$ since $Z_n = 0$.
\end{proof}

\begin{proof}[Proofs of Theorems \ref{mainthm2} and \ref{ourthm}]
The proof of Theorem \ref{mainthm2} follows trivially from Theorem \ref{maincoupling}. The proof of Theorem \ref{ourthm} also follows quite easily from Lemma \ref{mainlmm}, but some more work is required. We carry out the few remaining steps below.

For $r =1,2,\ldots$ let $m_r = 2^{2^r}$, and $n_r = m_r - m_{r-1}$. For each $r$ $(S^{(r)}_k, Z^{(r)}_k)_{0\le k\le n_r}$ be a random vector satisfying the conclusions of Lemma \ref{mainlmm}, and suppose these random vectors are independent. Inductively define an infinite sequence $(S_k,Z_k)_{k\ge 0}$ as follows. Let $S_k = S^{(1)}_k$ and $Z_k = Z^{(1)}_k$ for $k\le m_1$. Having defined $(S_k,Z_k)_{k\le m_{r-1}}$, define $(S_k,Z_k)_{m_{r-1} < k\le m_r}$ as
\[
S_k := S_{k-m_{r-1}}^{(r)} + S_{m_{r-1}}, \ \ Z_k := Z_{k-m_{r-1}}^{(r)} + Z_{m_{r-1}}.
\]
Clearly, since the increments are independent, $S_k$ and $Z_k$ are indeed random walks with binary and gaussian increments respectively.

Now recall the constants $B$ and $\lambda$ in Lemma \ref{mainlmm}. 
First, note that for each $r$, by Lemma \ref{mainlmm} and independence we have 
\begin{equation}\label{firstineq}
\begin{split}
\ee\exp(\lambda|S_{m_r}-Z_{m_r}|) &\le \ee \exp\biggl(\lambda \sum_{\ell=1}^r |S_{n_\ell}^{(\ell)} - Z_{n_{\ell}}^{(\ell)}|\biggr)\\
&= \prod_{\ell=1}^r \ee \exp\bigl(\lambda |S_{n_\ell}^{(\ell)} - Z_{n_{\ell}}^{(\ell)}|\bigr)\le B^r.
\end{split}
\end{equation}
Next, let 
\[
C = \frac{1}{1 - \frac{\exp(-\frac{1}{2}B \log 4)}{B}}.
\]
We will show by induction that for each $r$,
\begin{equation}\label{inducstep}
\ee\exp(\lambda\max_{k\le m_r} |S_k - Z_k|) \le CB^r \exp(B\log m_r).
\end{equation}
By Lemma \ref{mainlmm} and the facts that $B > 1$ and $C>1$, this holds for $r = 1$. Suppose it holds for $r-1$. By the inequality $\exp(x\vee y) \le \exp x + \exp y$, we have
\begin{equation}\label{induc}
\begin{split}
\ee\exp(\lambda\max_{k\le m_r} |S_k - Z_k|) &\le \ee\exp(\lambda\max_{m_{r-1}\le k\le m_r} |S_k - Z_k|) \\
&\qquad + \ee\exp(\lambda\max_{k\le m_{r-1}} |S_k - Z_k|).
\end{split}
\end{equation}
Let us consider the first term. We have
\[
\max_{m_{r-1}\le k\le m_r} |S_k - Z_k| \le \max_{1\le j\le n_r} |S_j^{(r)} - Z_j^{(r)}| + |S_{m_{r-1}} - Z_{m_{r-1}}|.
\]
Thus, by independence and Lemma \ref{mainlmm}, and the inequality \eqref{firstineq}, we get
\[
\ee\exp(\lambda\max_{m_{r-1}\le k\le m_r} |S_k - Z_k|) \le B^r\exp(B\log m_r).
\]
By the induction hypothesis and the relation $m_r = m_{r-1}^2$, we see that the second term in \eqref{induc} has the bound
\begin{align*}
\ee\exp(\lambda\max_{k\le m_{r-1}} |S_k - Z_k|) &\le CB^{r-1} \exp(B \log m_{r-1}) \\
&= CB^{r-1} \exp\biggl(\frac{B \log m_r}{2}\biggr).
\end{align*}
Combining, we get
\[
\ee\exp(\lambda\max_{k\le m_r} |S_k - Z_k|) \le B^r\exp(B\log m_r) \biggl(1+ \frac{C}{B}\exp\biggl(-\frac{B\log m_r}{2}\biggr)\biggr).
\]
From the definition of $C$, it easy to verify (since $m_r \ge 4$), that the term within the parentheses in the above expression is bounded by $C$. This completes the induction step.

So we have now shown \eqref{inducstep}. Since $r \le const. \log m_r$, this shows that there exists a constant $K$ such that for all $r$,
\[
\ee\exp(\lambda \max_{k\le m_r}|S_k - Z_k|)\le K \exp(K\log m_r).
\]
Now let us prove such an inequality for arbitrary $n$ instead of $m_r$. Take any $n\ge 2$. Let $r$ be such that $m_{r-1} \le n \le m_r$. Then $m_r = m_{r-1}^2 \le n^2$. Thus,
\begin{align*}
\ee\exp(\lambda \max_{k\le n}|S_k - Z_k|) &\le \ee\exp(\lambda \max_{k\le m_r}|S_k - Z_k|)\\
&\le K\exp(K\log m_r) \le K\exp(2K \log n).
\end{align*}
It is now easy to complete the argument using Markov's inequality.
\end{proof}
\vskip.1in
\noindent{\bf Acknowledgments.} The author is particularly indebted to David Mason and Andrei Zaitsev for clearing up many misconceptions about the literature and providing very helpful guidance. The author thanks Mikl\'os Cs\"org\H{o}, Persi Diaconis, Yuval Peres, Peter Bickel, Craig Evans, and Raghu Varadhan for useful  discussions and advice; and Ron Peled, Arnab Sen, Partha Dey, and Shankar Bhamidi for comments about the manuscript. Special thanks are due to Partha Dey for a careful verification of the proofs.

\end{document}